\newtheorem{theorem}{Theorem}[section]
\newtheorem{lem}[theorem]{Lemma}
\theoremstyle{definition}
\theoremstyle{remark}
\newtheorem{remark}[theorem]{Remark}
\numberwithin{equation}{section}
\begin{document}

\newcommand{\spacing}[1]{\renewcommand{\baselinestretch}{#1}\large\normalsize}
\spacing{1.14}

\title[Classification of Douglas $(\alpha,\beta)$-metrics]{Classification of Douglas $(\alpha,\beta)$-metrics on five dimensional nilpotent Lie groups}

\author {M. Hosseini}

\address{ Masoumeh Hosseini \\ Department of Pure Mathematics \\ Faculty of  Mathematics and Statistics\\ University of Isfahan\\ Isfahan\\ 81746-73441-Iran.} \email{hoseini.maasomeh@gmail.com}

\author {H. R. Salimi Moghaddam}

\address{ Hamid Reza Salimi Moghaddam \\ Department of Pure Mathematics \\ Faculty of  Mathematics and Statistics\\ University of Isfahan\\ Isfahan\\ 81746-73441-Iran.} \email{hr.salimi@sci.ui.ac.ir and salimi.moghaddam@gmail.com}

\keywords{five dimensional Lie group, Finsler metric of Douglas type, $(\alpha,\beta)$-metric, sectional and flag curvatures. \\
AMS 2010 Mathematics Subject Classification: 22E60, 53C60.}


\begin{abstract}
In this paper we classify all simply connected five dimensional nilpotent Lie groups which admit $(\alpha,\beta)$-metrics of Berwald and Douglas type defined by a left invariant Riemannian metric and a left invariant vector field. During this classification we give the geodesic vectors, Levi-Civita connection, curvature tensor, sectional curvature and $S$-curvature.
\end{abstract}

\maketitle
\section{\textbf{Introduction}}\label{Intro}
Suppose that $(M,\tilde{a})$ is a connected Riemannian manifold and $I(M)$ denotes its isometry group.
If there exists a nilpotent Lie subgroup $N$ of $I(M)$ such that acts transitively on $M$, $M$ is named
a Riemannian nilmanifold. In \cite{Wilson}, it is shown that for a given homogeneous Riemannian nilmanifold $M$,
there exists a unique nilpotent Lie subgroup of the isometry group, acting simply transitively on the manifold $M$.
Moreover, this subgroup is a normal subgroup of the isometry group. So a homogeneous Riemannian nilmanifold
can be considered as a Riemannian nilpotent Lie group $(N,\tilde{a})$, where $\tilde{a}$ is
a left invariant Riemannian metric.\\
In \cite{Lauret}, Lauret used the Wilson's procedure (see \cite{Wilson}) to classify homogeneous
nilmanifolds of dimensions $3$ and $4$ up to isometry. The same procedure have been used by Homolya and Kowalski
in classification of five dimensional two-step nilmanifolds (see \cite{Homolya-Kowalski}). \\
Recently, the classification of five dimensional nilmanifolds has been completed by Figula and Nagy in \cite{Figula-Nagy}.

In 2002, Deng and Hou generalized the Myers-Steenrod theorem (see \cite{Myers-Steenrod} for Myers-Steenrod theorem)
to Finsler manifolds. In fact they showed that, similar to the Riemannian case, the group of isometries of a
Finsler manifold is a Lie transformation group (see \cite{Deng-Hou}). It is an inflection point in the history of
Finsler geometry because this paper makes the possibility of using Lie theory in Finsler geometry. The study of
homogeneous Finsler manifolds is an interesting field which has grown after the year 2002, for example see
\cite{An-Deng Monatsh, Deng, DeHoLiSa, Deng-Hu-Canadian, Liu-Deng, Nasehi, Salimi} and \cite{Yan-Deng}.
A Finsler metric on a manifold $M$ is a map $F:TM \longrightarrow \left[ 0, \infty \right)$ which satisfies the following conditions
\begin{itemize}
\item[a)] $F$ is smooth on $TM\backslash \{0\}$,
\item[b)] $F$ is a positive homogeneous function of degree one which means that $F(x,\lambda y)=\lambda F(x,y)$, \ \ \ \ for all $\quad  \lambda>0$,
\item[c)] The matrix $(g_{ij})=\left( \dfrac{1}{2} \dfrac{\partial ^2 F^2}{\partial y^i \partial y^j}\right) $, which is called the Hessian matrix, for any $(x,y) \in TM\backslash \{0\}$ is positive definite.
\end{itemize}
In this paper we use left invariant Finsler metrics on nilpotent Lie groups. A left invariant Finsler metric on a
Lie group $G$ is a Finsler metric which for any  $x\in G$ and $y\in T_x G$ holds the following equation
\begin{equation}
F(x,y)=F(e,dl\,_{x^{-1}}y).
\end{equation}
Matsumoto, in his famous article \cite{Matsumoto}, studied an important family of Finsler metrics which is called
$(\alpha, \beta)$-metrics. By definition (see  \cite{Chern-Shen}), an $(\alpha, \beta)$-metric $F$ is a combination of a norm $\alpha (x,y)=\sqrt{\tilde{a}(y,y)}$ defined by a Riemannian metric $\tilde{a}$ and a $1$-form $\beta (x,y)=b_iy^i$, of the form $F = \alpha \phi (\frac{\beta}{\alpha})$, where $\phi : (-b_0 , b_0) \longrightarrow \mathbb{R^+}$ is a $C^\infty$ real function satisfying the following inequality:
\begin{equation}
\phi (s) - s \phi ^{'} (s) + ( b^2 - s^2 ) \phi ^{''} (s) > 0, \qquad  \vert s \vert  \leq b < b_0,
\end{equation}
and for any $x \in M$, $\Vert \beta _x\Vert _{\alpha}=\sqrt{a^{ij} (x)b_i(x) b_j(x) } < b_0$.\\
In the above definition if we put $\phi (s) = 1+s$, then we have the most famous $(\alpha, \beta)$-metric $F=\alpha + \beta$ which is named Randers metric. We mention that for Randers metrics, we have $b_0=1$. \\
In the study of homogeneous $(\alpha, \beta)$-metric spaces, it is very useful to use the dual vector field $X$ of
the $1$-form $\beta$ defined by
\begin{equation}
\tilde{\textsf{a}}(y,X(x)) = \beta (x,y).
\end{equation}
Using this notation, easily we can define left invariant $(\alpha, \beta)$-metrics on Lie groups. For example, let $\tilde{a}$ be a left invariant Riemannian metric on a Lie group $G$. Then for any left invariant vector field $X$ with the condition $\Vert X\Vert_{\alpha}< 1$, the function
\begin{equation}\label{Randers}
F(x,y)=\sqrt{\tilde{a}\left( y ,y \right)}+\tilde{a}\left( X\left( x\right) ,y\right),
\end{equation}
is a left invariant Randers metric on $G$.\\
In a standard local coordinate system $(x,y)=(x^1,\cdots,x^n, y^1,\cdots, y^n)$ of $TM$, the spray coefficients $G^i$ of a Finsler manifold $(M,F)$ are defined by
\begin{equation}\label{spray coefficients}
G^i:=\frac{1}{4}g^{il}([F^2]_{x^my^l}y^m-[F^2]_{x^l}).
\end{equation}
An interesting connection in Finsler geometry is the Chern connection which is defined on the pulled-back bundle $\pi^\ast TM$. In general case the Christoffel symbols $\Gamma^i_{jk}$ of the Chern connection are functions of the points $(x,y)$. If in any standard local coordinate system $(x,y)$, the Christoffel symbols of the Chern connection are only dependent on $x$ then $F$ is named a Berwald metric. In this case the spray coefficients are of the form $G^i=\frac{1}{2}\Gamma^i_{jk}(x)y^jy^k$.\\
A more general family of Finsler metrics is the family of Douglas metrics (see \cite{Chern-Shen}). $F$ is said a Douglas metric if in any standard local coordinate system, there exists a local positively homogeneous function of degree one $P(x ,y)$ such that
\begin{equation}\label{spray coefficients of Douglas metrics}
    G^i=\frac{1}{2}\Gamma^i_{jk}(x)y^jy^k+P(x,y)y^i.
\end{equation}
A non-Riemannian quantity, introduced by Shen \cite{Shen}, is S-curvature. It describes the changes rate of the volume along geodesics in Finsler spaces. For a left invariant Randers metric $F(x,y)=\sqrt{\tilde{a}\left( y ,y \right)}+\tilde{a}\left( X\left( x\right) ,y\right)$, defined by a left invariant Riemannian metric $\tilde{a}$, on a n-dimensional Lie group $G$, it is given by
\begin{equation}
S(y)=\dfrac{n+1}{2}\bigg\{\dfrac{\tilde{a}\left( [X,y],\tilde{a}\left( y,X\right)X-y \right) }{F(y)}-\tilde{a}\left( [X,y],X\right) \bigg\}.
\end{equation}
for mor details see \cite{Deng}.\\
The second author of the present article used the classification of five dimensional two-step nilpotent Lie groups given in \cite{Homolya-Kowalski} to characterize the left invariant Randers metrics of Berwald type on five dimensional two-step nilpotent Lie groups (see \cite{Salimi}). In \cite{Nasehi}, these results are generalized to Randers metrics of Douglas type.

Figula and Nagy in \cite{Figula-Nagy} proved that, up to isometry, any five dimensional Riemannian nilmanifold of nilpotency class greater than two is of the form $\mathfrak{l}_{5,5}$, $\mathfrak{l}_{5,6}$, $\mathfrak{l}_{5,7}$ and $\mathfrak{l}_{5,9}$. Therefor, the results of \cite{Figula-Nagy} together with the results of \cite{Homolya-Kowalski} complete the classification of (non-commutative) five dimensional Riemannian nilmanifolds.

In this work, using the above classification, we characterize all left invariant $(\alpha, \beta)$-metrics of Berwald type and Douglas type on five dimensional nilpotent Lie groups which are defined by a left invariant Riemannian metric and a left invariant vector field. In any case, we give the geodesic vectors, Levi-Civita connection, curvature tensor, sectional curvature and $S$-curvature. About flag curvature of $(\alpha, \beta)$-metrics, we mention that in Theorem 3.2 of \cite{DeHoLiSa}, we have shown that at any point there exists three directions such that the flag curvature admits positive, negative and zero values. So, although they are computable, we do not give the flag curvature formulas here.

\textbf{Notation.} During this paper all Lie groups are non-commutative and simply connected. Also, in this paper, we do not consider the Lie algebras which are direct products of Lie algebras of lower dimension. In any case the set $\{E_1,\cdots,E_5\}$ is an orthonormal basis of the Lie algebras with respect to the inner products induced by the considered left invariant Riemannian metrics. Also, we mention that in definition of Lie algebras we only write the non-zero Lie brackets.


\section{\textbf{Five dimensional two-step nilpotent Lie groups}}
In \cite{Homolya-Kowalski}, Homolya and Kowalski, up to isometry, classified the simply connected five dimensional two-step nilpotent Lie groups (Lie algebras) equipped with left invariant metrics as follows:
\begin{description}\label{5-dim 2-step nilpoten}
  \item[(i) 1-dimensional center] $[E_1,E_2]=\lambda E_5$ and $[E_3,E_4]=\mu E_5$, $\lambda\geq\mu>0$,
  \item[(ii) 2-dimensional center] $[E_1,E_2]=\lambda E_4$ and $[E_1,E_3]=\mu E_5$, $\lambda\geq\mu>0$,
  \item[(iii) 3-dimensional center] $[E_1,E_2]=\lambda E_3$,$\lambda>0$.
\end{description}
In \cite{Liu-Deng}, it is proved that a Douglas homogeneous $(\alpha, \beta)$-metric must be a Berwaldian metric or a Douglas Randers metric. We use this result to classify left invariant Douglas $(\alpha, \beta)$-metrics on five dimensional nilpotent Lie groups.\\
Riemannian geometry of these spaces is investigated in \cite{Salimi} and it is shown that among the above classes, only the class (iii) admits left invariant Randers metrics of Berwald type. In the family (iii), a left invariant Randers metric is of Berwald type if and only if, in the formula (\ref{Randers}), $X=y_4E_4+y_5E_5$ and $0<\sqrt{y_4^2+y_5^2}<1$. By attention to this fact that, similar to the Randers metrics, an $(\alpha, \beta)$-metric is a Berwaldian Finsler metric if and only if the vector field $X$ is parallel to the Levi-Civita connection of the Riemannian metric $\tilde{a}$ (see \cite{Matsumoto2}), the only two-step five dimensional nilpotent Lie group which admits left invariant Berwaldian $(\alpha, \beta)$-metrics belongs to the class (iii) and in this case we have $X=y_4E_4+y_5E_5$.

On the other hand, easily we can see all the above three families admit left invariant Randers metrics of Douglas type (see \cite{Nasehi}). It is sufficient to use the fact that a left invariant Randers metric is of Douglas type if and only if, in the equation (\ref{Randers}), $X$ is a left invariant vector field with the length less than one and orthogonal to the derived subalgebra (\cite{An-Deng Monatsh}).\\
So the classification of $(\alpha, \beta)$-metrics of Douglas type, which are defined by a left invariant Riemannian metric and a left invariant vector field, on two-step nilpotent Lie groups is completed.\\
In the following theorem we give the geodesic vectors in any case.

\begin{theorem} \label{geodesic vector}
Let $G$ be a five dimensional two-step nilpotent Lie group equipped with a Randers metric of Douglas type arisen from a left invariant Riemannian metric $\tilde{a}$ and a left invariant vector field $X$. Then,
\begin{enumerate}
\item[(i)] If $G$ is a 1-dimensional center Lie group then $Y$ is a geodesic vector of $(G,F)$ if and only if $Y\in \textit{span}\{E_2,E_3,E_4\}$ or $Y\in \textit{span}\{E_5\}$.
\item[(ii)] If $G$ is a 2-dimensional center Lie group then $Y$ is a geodesic vector $(G,F)$ if and only if $Y\in \textit{span}\{E_1,E_2,E_3\}$ or $Y\in \textit{span}\{E_2,E_3,E_4,E_5\}$ satisfying $\lambda y_2y_4+\mu y_3y_5=0$.
\item[(iii)] If $G$ is a 3-dimensional center Lie group then $Y$ is a geodesic vector $(G,F)$ if and only if $Y\in \textit{span}\{E_1,E_2,E_4,E_5\}$ or $Y\in \textit{span}\{E_3,E_4,E_5\}$.
\end{enumerate}
\end{theorem}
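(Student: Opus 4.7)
The plan is to apply the standard Latifi-type characterisation of geodesic vectors in a homogeneous Randers space: a nonzero $Y \in \mathfrak g$ is a geodesic vector of $(G, F)$ if and only if $g_Y(Y, [Y, Z]) = 0$ for every $Z \in \mathfrak g$, where $g_Y$ is the fundamental tensor of $F$ at $Y$. Specialising to $F = \alpha + \beta$ with $\beta(\cdot) = \tilde a(X, \cdot)$, this expands into an explicit polynomial identity coupling $\tilde a(Y, [Y, Z])$, $\tilde a(X, [Y, Z])$, $\alpha(Y)$ and $\beta(Y)$. The Douglas hypothesis, via the An--Deng criterion recalled in the paragraph just before the theorem, forces $X$ to be orthogonal to the derived subalgebra $[\mathfrak g, \mathfrak g]$; reading off the brackets then restricts $X$ to $\textit{span}\{E_1, E_2, E_3, E_4\}$, $\textit{span}\{E_1, E_2, E_3\}$, and $\textit{span}\{E_1, E_2, E_4, E_5\}$ in classes (i), (ii), (iii) respectively.

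In classes (i) and (iii) the derived subalgebra is one-dimensional, so $[Y, Z]$ lies along a single basis vector ($E_5$ in (i), $E_3$ in (iii)) and is automatically orthogonal to $X$. Latifi's identity therefore collapses to the Riemannian condition $\tilde a(Y, [Y, Z]) = 0$ for all $Z$. Expanding in the orthonormal basis, this becomes the product of the component of $Y$ along the generator of $[\mathfrak g, \mathfrak g]$ with a linear form in the components of $Z$; running $Z$ over $E_1, \ldots, E_5$ then yields the dichotomy that either that component of $Y$ vanishes, placing $Y$ in the codimension-one hyperplane of (i) or (iii), or the complementary components of $Y$ all vanish, placing $Y$ on the single coordinate line of (i) or (iii). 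Matching against the two stated alternatives is immediate.

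Class (ii) is where the main difficulty lies: here $[\mathfrak g, \mathfrak g] = \textit{span}\{E_4, E_5\}$ is two-dimensional and both pieces of Latifi's identity survive. I would expand $[Y, E_k]$ and $[X, Y]$ using the two nonzero brackets $[E_1, E_2] = \lambda E_4$ and $[E_1, E_3] = \mu E_5$, and then impose the identity for $Z = E_1, \ldots, E_5$. The tests $Z = E_4$ and $Z = E_5$ produce equations whose common factor is $y_1$, while $Z = E_2, E_3$ bring in cross terms that assemble into the combination $\lambda y_2 y_4 + \mu y_3 y_5$. The solution set should then split into the branch $y_4 = y_5 = 0$, giving $\textit{span}\{E_1, E_2, E_3\}$, and the branch $y_1 = 0$ together with the coupling $\lambda y_2 y_4 + \mu y_3 y_5 = 0$, giving the stated subvariety of $\textit{span}\{E_2, E_3, E_4, E_5\}$. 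The principal obstacle is verifying that the coupling $\lambda y_2 y_4 + \mu y_3 y_5 = 0$ is precisely what is forced, rather than the strictly stronger pair $\lambda y_2 y_4 = \mu y_3 y_5 = 0$; this requires a careful choice of test vector $Z$ so that the two bracket contributions can only appear through the combined form. Once this is settled, the algebra in each branch is linear and direct.
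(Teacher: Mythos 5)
Your overall strategy is the same as the paper's: reduce the Randers geodesic-vector condition to the Riemannian one using the Douglas hypothesis, then solve the resulting quadratic system by running $Z$ over the basis. (The paper invokes the Kowalski--Vanhecke geodesic lemma for $\tilde a$ and then Corollary 2.7 of Yan--Deng; your Latifi-type identity $g_Y(Y,[Y,Z])=0$ with $g_Y(Y,W)=F(Y)\bigl(\tilde a(Y,W)/\alpha(Y)+\tilde a(X,W)\bigr)$ is the same mechanism.) However, one conceptual point in your plan is off: the term $\tilde a(X,[Y,Z])$ dies in \emph{all three} cases, not only when the derived subalgebra is one-dimensional, because the An--Deng criterion makes $X$ orthogonal to the whole of $[\mathfrak{g},\mathfrak{g}]$ whatever its dimension. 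So in case (ii) there is no surviving ``$\beta$-piece'' and no difficulty about the coupling: the combination $\lambda y_2y_4+\mu y_3y_5=0$ drops out of the single test $Z=E_1$ (since $[Y,E_1]=-\lambda y_2E_4-\mu y_3E_5$), while $Z=E_2$ and $Z=E_3$ give $\lambda y_1y_4=0$ and $\mu y_1y_5=0$, and $Z=E_4,E_5$ give nothing because they are central. Your attribution of the equations to the test vectors is therefore reversed, though the branch structure you predict for (ii) is correct and matches the paper.

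The genuine gap is in case (i), where you assert that ``matching against the two stated alternatives is immediate.'' It is not. With $[E_1,E_2]=\lambda E_5$, $[E_3,E_4]=\mu E_5$, the Riemannian conditions are $\lambda y_2y_5=0$, $\lambda y_1y_5=0$, $\mu y_4y_5=0$, $\mu y_3y_5=0$, whose solution set is the hyperplane $y_5=0$, i.e.\ $\textit{span}\{E_1,E_2,E_3,E_4\}$, together with $\textit{span}\{E_5\}$. This does not match the theorem's first alternative $\textit{span}\{E_2,E_3,E_4\}$, which excludes $E_1$. The paper reaches the smaller set only because its displayed system contains the equation $\lambda y_1^2=0$ in place of $\lambda y_1y_5=0$ (the condition coming from $Z=E_2$), and you would have to either reproduce that equation (which the bracket relations do not appear to give) or accept that your argument proves a different statement than (i). Until that discrepancy is resolved, your proof of part (i) is incomplete. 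Parts (ii) and (iii) are fine once you note that in (iii) the second branch is $y_1=y_2=0$, i.e.\ the three-dimensional subspace $\textit{span}\{E_3,E_4,E_5\}$, not a single coordinate line.
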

\begin{proof}
By geodesic lemma in \cite{Kowalski-Vanhecke}, $Y$ is a geodesic vector of $(G,\tilde{a})$ if and only if  $\tilde{a}\left( [Y,Z],Y\right) =0$ for all $Z\in \mathfrak{l}_{5,7}$. Equivalently,  $Y$ is a geodesic vector of $(G,\tilde{a})$ if and only if $\tilde{a}\left( [Y,E_i],Y\right) =0$, $i=1,...,5$. Hence, $Y=y_1E_1+y_2E_2+y_3E_3+y_4E_4+y_5E_5$ is a  geodesic vector of $(G,\tilde{a})$ if and only if the following equations hold:
\begin{equation}
 \left\{
\begin{array}{rl}
 & \textit{in case (i)} \qquad \lambda y_2y_5=0,\quad \mu y_4y_5=0, \quad \lambda y_1^2=0, \quad \mu y_3y_5=0,\\
&  \textit{in case (ii)} \qquad \lambda y_2y_4+\mu y_3y_5=0, \quad \lambda y_1y_4=0, \quad \mu y_1y_5=0,\\
& \textit{in case (iii)} \qquad \lambda y_2y_3=0, \quad \lambda y_1y_3=0.
\end{array} \right.
\end{equation}
  Now, Corollary 2.7 in \cite{Yan-Deng} completes the proof.
\end{proof}
About the $S-$curvature, one can find the formulas of all above cases in \cite{Nasehi}.
\section{\textbf{Five dimensional nilpotent Lie groups of nilpotency class greater than two}}
In this section we classify all $(\alpha, \beta)$-metrics of Douglas type, defined by a left invariant Riemannian metric and a left invariant vector field, on five dimensional nilpotent Lie groups of nilpotency class greater than two. In \cite{Figula-Nagy}, it is shown that, up to isometry, five dimensional nilpotent Lie algebras of nilpotency classes three and four are as follows:
\begin{description}
  \item[Four-step nilpotent Lie algebra $\mathfrak{l}_{5,7}$] $[E_1,E_2]=aE_3+bE_4+cE_5$, $[E_1,E_3]=dE_4+fE_5$, $[E_1,E_4]=gE_5$, where $a,d,g>0$ and either $b>0$ or $b=0$, $f\geq0$,
  \item[Four-step nilpotent Lie algebra $\mathfrak{l}_{5,6}$] $[E_1,E_2]=aE_3+bE_4+cE_5$, $[E_1,E_3]=dE_4+fE_5$, $[E_1,E_4]=gE_5$, $[E_2,E_3]=hE_5$ where $a,d,g,h>0$ and either $b>0$ or $b=0$, $f\geq0$,
  \item[Three-step nilpotent Lie algebra $\mathfrak{l}_{5,5}$] $[E_1,E_2]=aE_4+bE_5$, $[E_1,E_3]=cE_5$, $[E_1,E_4]=dE_5$, $[E_2,E_3]=eE_5$ where $a,d,e>0$, $b,c\geq0$,
  \item[Three-step nilpotent Lie algebra $\mathfrak{l}_{5,9}$] $[E_1,E_2]=kE_3+lE_4+mE_5$, $[E_1,E_3]=pE_4$, $[E_2,E_3]=qE_5$ where $k>0$, $q>p>0$ and $l,m\geq0$ or $k,p>0$, $l\geq0$, $m=0$, $q=p$.
\end{description}
In the following four subsections we give Levi-Civita connection, sectional curvature, Douglas $(\alpha, \beta)$-metrics, geodesic vectors and the $S$-curvatures of each case. Unless stated otherwise, all $(\alpha ,\beta)$-metrics in this section have been defined by a left invariant Riemannian metric $\tilde{a}$ and a left invariant vector field $X$.

\subsection{\textbf{Four-step nilpotent Lie groups with Lie algebra $\mathfrak{l}_{5,7}$}}
By attention to this fact that the set $\{E_1,\cdots,E_5\}$ is an orthonormal basis for the Lie algebra
$\mathfrak{l}_{5,7}$, the Levi-Civita connection is as follows:
 \begin{center}
 \fontsize{6}{0}{\selectfont
\begin{tabular}{| c| c| c| c| c| c| }
\hline $\mathfrak{l}_{5,7}$  & $E_1$  & $E_2$ & $E_3$ & $E_4$ & $E_5$\\
\hline  $\nabla _{E_1}$  & $0$ & $\frac{1}{2}(aE_3+bE_4+cE_5)$ & $\frac{1}{2}(dE_4+fE_5-aE_2)$ & $-\frac{1}{2}(bE_2+dE_3-gE_5)$ & $-\frac{1}{2}(cE_2+fE_3+gE_4)$\\
\hline $\nabla _{E_2}$ & $-\frac{1}{2}(aE_3+bE_4+cE_5)$ & $0$ & $\frac{1}{2}aE_1$ & $\frac{1}{2}bE_1$ & $\frac{1}{2}cE_1$\\
\hline $\nabla _{E_3}$ & $-\frac{1}{2}(aE_2+dE_4+fE_5)$ & $\frac{1}{2}aE_1$ & $0$ & $\frac{1}{2}dE_1$ & $\frac{1}{2}fE_1$\\
\hline $\nabla _{E_4}$ & $-\frac{1}{2}(bE_2+dE_3+gE_5)$ &  $\frac{1}{2}bE_1$ & $\frac{1}{2}dE_1$ & $0$ & $\frac{1}{2}gE_1$\\
\hline $\nabla _{E_5}$ & $-\frac{1}{2}(cE_2+fE_3+gE_4)$ & $\frac{1}{2}cE_1$ & $\frac{1}{2}fE_1$ & $\frac{1}{2}gE_1$ &$0$\\
\hline
\end{tabular}}
\end{center}
Using the Levi-Civita connection we have computed the Riemannian curvature tensor which is given in the appendix. Then, a direct computation shows that for the sectional curvature we have:

 \begin{center}
 \fontsize{8}{0}{\selectfont
\begin{tabular}{| c| c| c| c| c| c| }
\hline $\mathfrak{l}_{5,7}$  & $E_1$  & $E_2$ & $E_3$ & $E_4$ & $E_5$\\
\hline  $K(E_1,.)$  & & $-\frac{3}{4}(a^2+b^2+c^2)$ & $-\frac{1}{4}(3d^2+3f^2-a^2)$ & $\frac{1}{4}(b^2+d^2-3g^2)$ & $\frac{1}{4}(c^2+f^2+g^2)$\\
\hline $K(E_2,.)$ & $*$ &  & $\frac{1}{4}a^2$ & $\frac{1}{4}b^2$ & $\frac{1}{4}c^2$\\
\hline $K(E_3,.)$ & $*$ & $*$ &  & $\frac{1}{4}d^2$ & $\frac{1}{4}f^2$\\
\hline $K(E_4,.)$ & $*$ &  $*$ & $*$ &  & $\frac{1}{4}g^2$\\
\hline $K(E_5,.)$ & $*$ & $*$ & $*$ & $*$ &\\
\hline
\end{tabular}}
\end{center}
Using the above tables we can give the Ricci curvature and scalar curvature as follows:
 \begin{center}
 \fontsize{8}{0}{\selectfont
\begin{tabular}{| c| c| c| c| c| c| }
\hline $\mathfrak{l}_{5,7}$  & $E_1$  & $E_2$ & $E_3$ & $E_4$ & $E_5$\\
\hline  $Ric(.)$  & $-\frac{1}{2}(a^2+b^2+c^2+d^2+f^2+g^2)$ & $-\frac{1}{2}(a^2+b^2+c^2)$ & $\frac{1}{2}(a^2-d^2-f^2)$ & $\frac{1}{2}(b^2+d^2-g^2)$ & $\frac{1}{2}(c^2+f^2+g^2)$\\
\hline
\end{tabular}}
\end{center}
$\textsc{scalar curvature}=-\frac{1}{2}(a^2+b^2+c^2+d^2+f^2+g^2)<0$.\\

In the next lemma we characterize left invariant Douglas $(\alpha ,\beta )$-metrics on Lie groups with Lie algebra $\mathfrak{l}_{5,7}$.

\begin{lem} \label{Douglas metric}
Let $G$ be a five dimensional nilpotent Lie group with Lie algebra $\mathfrak{l}_{5,7}$ equipped with a left invariant $(\alpha ,\beta )$-metric $F$.
 Then, $F$ is of Douglas type if and only if $X\in \textit{span}\{E_1,E_2\}$.
\end{lem}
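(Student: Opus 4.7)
The plan is to use the Liu--Deng dichotomy already recalled earlier in the paper: a homogeneous Douglas $(\alpha,\beta)$-metric must be either Berwaldian or a Douglas Randers metric. So the proof splits naturally into two parts, proving that the Berwald alternative is vacuous on $\mathfrak{l}_{5,7}$, and identifying the exact constraint coming from the Douglas Randers alternative.

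For the Berwald part, I will use the Matsumoto-type characterization recalled in Section~2: an $(\alpha,\beta)$-metric built from $(\tilde{a},X)$ is Berwaldian iff $X$ is $\tilde{a}$-parallel. Writing $X=\sum_{i=1}^5 x_i E_i$ and imposing $\nabla_{E_j}X=0$ for $j=1,\dots,5$ using the Levi-Civita table displayed above, I would read off the linear system on $(x_1,\dots,x_5)$. The cascade is short: the $E_3$-coefficient of $\nabla_{E_2}X$ gives $a x_1=0$, so $x_1=0$ since $a>0$; combining the $E_3$-coefficient of $\nabla_{E_1}X$ with the $E_1$-coefficient of $\nabla_{E_3}X$ yields $2a x_2=0$, forcing $x_2=0$; then combining the $E_4$-coefficient of $\nabla_{E_1}X$ with the $E_1$-coefficient of $\nabla_{E_4}X$ (and $x_2=0$) gives $2d x_3=0$, so $x_3=0$ since $d>0$, after which $gx_5=0$ and then $dx_4=0$ complete the chain. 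Hence the only $\tilde{a}$-parallel left invariant vector field on $\mathfrak{l}_{5,7}$ is $X=0$, so no non-trivial Berwald $(\alpha,\beta)$-metric exists on $G$.

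For the Douglas Randers part, I invoke the An--Deng criterion cited in Section~2: a left invariant Randers metric built from $(\tilde{a},X)$ is Douglas iff $X$ is orthogonal to the derived subalgebra. Reading the brackets of $\mathfrak{l}_{5,7}$ and using $a,d,g>0$, back-substitution in the triple $\{aE_3+bE_4+cE_5,\ dE_4+fE_5,\ gE_5\}$ shows that the derived subalgebra equals $\mathrm{span}\{E_3,E_4,E_5\}$. Since $\{E_1,\dots,E_5\}$ is $\tilde{a}$-orthonormal, orthogonality to this subspace is exactly $X\in\mathrm{span}\{E_1,E_2\}$.

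Putting the pieces together: if $F$ is a Douglas $(\alpha,\beta)$-metric then by Liu--Deng it is Berwald or Douglas Randers; the first possibility is excluded by the computation of the first paragraph, so $F$ is Douglas Randers, and the An--Deng criterion forces $X\in\mathrm{span}\{E_1,E_2\}$. Conversely, if $X\in\mathrm{span}\{E_1,E_2\}$ then $X$ is orthogonal to the derived subalgebra and the An--Deng criterion produces a Douglas Randers metric. I expect the only real obstacle to be the Berwald computation, which is bookkeeping-heavy because of the five equations $\nabla_{E_j}X=0$; the rest is an application of two theorems from the literature combined with a one-line computation of $[\mathfrak{l}_{5,7},\mathfrak{l}_{5,7}]$.
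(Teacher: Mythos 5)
Your argument is correct in substance but follows a genuinely different route from the paper. The paper's proof is two lines: it invokes Theorem 3.2 of \cite{An-Deng Monatsh} directly for general invariant $(\alpha,\beta)$-metrics, asserting that $F$ is of Douglas type if and only if $X$ is orthogonal to the derived subalgebra, and then observes that $[\mathfrak{l}_{5,7},\mathfrak{l}_{5,7}]=\mathrm{span}\{E_3,E_4,E_5\}$ since $a,d,g>0$. You instead route through the Liu--Deng dichotomy of \cite{Liu-Deng}, eliminate the Berwald branch by showing (via Matsumoto's parallelism criterion \cite{Matsumoto2} and the Levi-Civita table) that the only $\tilde{a}$-parallel left invariant vector field is $X=0$, and then apply the Randers-specific orthogonality criterion. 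Your Berwald computation is valid (the cascade $x_1=0$, then $ax_2=0$ from combining the $E_3$-component of $\nabla_{E_1}X$ with the $E_1$-component of $\nabla_{E_3}X$, and so on, does close up), but note that it duplicates work the paper defers to Theorem \ref{non-Riemannian metric of Douglas type}, where the same conclusion is reached from the algebraic Berwald condition $\tilde{a}([X,Z],Y)+\tilde{a}([X,Y],Z)=0$ of \cite{DeHoLiSa} rather than from $\nabla X=0$. What your approach buys is independence from the general $(\alpha,\beta)$ version of the An--Deng criterion: you only need the Randers case, which is the version actually quoted in Section 2.

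One caveat you should make explicit: your converse only shows that $X\in\mathrm{span}\{E_1,E_2\}$ yields a Douglas metric when $F$ is Randers (or Riemannian). For a non-Randers, non-Riemannian $\phi$ with $X\in\mathrm{span}\{E_1,E_2\}\setminus\{0\}$, your own first paragraph combined with the Liu--Deng dichotomy shows $F$ is \emph{not} Douglas, so the ``if'' direction of the lemma as literally stated holds only on the Randers/Riemannian branch. This tension is really an imprecision in the lemma's formulation (the paper resolves it only afterwards, in Theorem \ref{non-Riemannian metric of Douglas type}) rather than an error on your part, but your write-up should state under which hypothesis on $\phi$ the reverse implication is being claimed.
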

\begin{proof}
By theorem 3.2 of \cite{An-Deng Monatsh}, $F$  is of Douglas type if and only if X is orthogonal to $[\mathfrak{l}_{5,7} , \mathfrak{l}_{5,7}]$. It is clear that,  $[\mathfrak{l}_{5,7} , \mathfrak{l}_{5,7}]=\textit{span}\{E_3,E_4,E_5\}$ and this completes the proof of the lemma.
\end{proof}

In the following theorem we see that the only Douglas $(\alpha ,\beta )$-metric is Randers metric.

\begin{theorem} \label{non-Riemannian metric of Douglas type}
If $F$ is any left invariant non-Riemannian  $(\alpha ,\beta )$-metric of Douglas type on $G$ 
then $F$ must be a  Randers metric.
\end{theorem}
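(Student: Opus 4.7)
The plan is to combine three ingredients already available in the paper: the Liu--Deng dichotomy mentioned in Section~2 (any homogeneous Douglas $(\alpha,\beta)$-metric is either Berwaldian or a Douglas--Randers metric), Matsumoto's criterion (also recalled in Section~2) that an $(\alpha,\beta)$-metric built from $\tilde a$ and $X$ is Berwaldian if and only if $X$ is parallel with respect to the Levi-Civita connection of $\tilde a$, and Lemma~\ref{Douglas metric}, which restricts $X$ to $\mathrm{span}\{E_1,E_2\}$ whenever $F$ is of Douglas type. The whole task is then to verify that on $\mathfrak{l}_{5,7}$ the Berwald case reduces to the Riemannian metric $\alpha$, so that a \emph{non-Riemannian} Douglas member must fall on the Randers side of the dichotomy.

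First, I would invoke Liu--Deng to conclude that $F$ is either Berwaldian or a Randers metric. If the latter holds we are done, so it suffices to rule out the Berwald case under the assumption that $F$ is non-Riemannian. By Matsumoto's criterion, being Berwald forces $\nabla X = 0$, where $\nabla$ is the Levi-Civita connection of $\tilde a$ recorded in the table for $\mathfrak{l}_{5,7}$ above.

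Second, using Lemma~\ref{Douglas metric} I may write $X=x_1E_1+x_2E_2$. Reading off the Levi-Civita table, $\nabla_{E_1}X = \tfrac{x_2}{2}(aE_3+bE_4+cE_5)$ and $\nabla_{E_2}X = -\tfrac{x_1}{2}(aE_3+bE_4+cE_5)$. Because $a>0$ (by the defining inequalities of $\mathfrak{l}_{5,7}$), the requirement that both of these covariant derivatives vanish forces $x_1=x_2=0$; hence $X=0$, making $F=\alpha$ Riemannian, contrary to hypothesis. Therefore the Berwald branch of the Liu--Deng dichotomy is vacuous in this non-Riemannian setting, and $F$ must be a Randers metric.

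There is no real obstacle here beyond the short linear-algebra check of $\nabla X=0$; the conceptual content is carried entirely by the Liu--Deng theorem and Lemma~\ref{Douglas metric}. The only thing to be careful about is to cite Matsumoto's characterization of Berwald $(\alpha,\beta)$-metrics when converting the Berwald hypothesis into the parallelism condition on $X$, and to use the strict positivity of the structure constant $a$ (rather than $b,c$, which may vanish) to conclude that both $x_1$ and $x_2$ are forced to be zero.
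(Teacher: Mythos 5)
Your proposal is correct and follows the same overall strategy as the paper: invoke the Liu--Deng dichotomy, restrict $X$ to $\mathrm{span}\{E_1,E_2\}$ via Lemma~\ref{Douglas metric}, and show that the Berwald branch forces $X=0$, contradicting the non-Riemannian hypothesis. The only difference is in how the Berwald condition is tested --- the paper uses the equivalent bracket identity $\tilde{a}([X,Z],Y)+\tilde{a}([X,Y],Z)=0$ from Proposition 4.1 of \cite{DeHoLiSa} (evaluated at $(Z,Y)=(E_5,E_4)$ and $(E_1,E_3)$, using $g>0$ and $a>0$), whereas you verify the parallelism condition $\nabla X=0$ directly from the Levi-Civita table using only $a>0$; both computations are valid.
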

\begin{proof}
Assume that $F$ such an $(\alpha ,\beta )$-metric. Theorem 1.1 of \cite{Liu-Deng} says that $F$ is a Randers metric or of Berwald type.  Suppose $F$ is not a Randers metric. So, $F$ is of Berwald type and by Proposition 4.1 of \cite{DeHoLiSa}, $ X$ is orthogonal to $[\mathfrak{l}_{5,7} , \mathfrak{l}_{5,7}]$ and we have
\begin{equation}
\tilde{a}\left( [X,Z] , Y \right) + \tilde{a}\left( [X,Y] , Z \right) =0,\qquad \forall \, Y,Z \in \mathfrak{l} _{5,7}.
\end{equation}
According to the previous lemma $X=\lambda _1E_1+\lambda _2E_2$ $(\lambda _1,\lambda _2 \in \mathbb{R})$. If we set $Z=E_5$ and $Y=E_4$ in the above equation, we obtain  $\lambda _1=0$. Again  $Z=E_1$ and $Y=E_3$  implies $\lambda _2=0$. So, $F$ is a Riemannian metric which is a contradiction.
\end{proof}

Here the geodesic vectors are given.

\begin{theorem}
Suppose $G$ is a five dimensional nilpotent Lie group with Lie algebra $\mathfrak{l}_{5,7}$ equipped with a Randers metric
of Douglas type. 
Then, $Y=y_1E_1+y_2E_2+y_3E_3+y_4E_4+y_5E_5$ is a geodesic vector of $(G,F)$ if and only if $Y\in \textit{span}\{E_1,E_2\}$ or $Y$ is orthogonal to $E_1$ and $y_2\left( ay_3+by_4+cy_5 \right)+y_3\left( dy_4+fy_5\right) +gy_4y_5=0$.
\end{theorem}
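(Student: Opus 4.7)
The plan is to mirror the strategy of Theorem \ref{geodesic vector}. The key input is Corollary 2.7 of \cite{Yan-Deng}, which, combined with the Douglas hypothesis (so that by Lemma \ref{Douglas metric} we have $X\in\textit{span}\{E_1,E_2\}$), reduces the problem to characterizing the geodesic vectors of the underlying Riemannian metric $\tilde{a}$. By the geodesic lemma of Kowalski--Vanhecke, this amounts to determining those $Y=\sum_i y_iE_i$ satisfying $\tilde{a}([Y,E_i],Y)=0$ for every $i=1,\dots,5$.

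I would then read off the brackets $[Y,E_i]$ directly from the structure constants of $\mathfrak{l}_{5,7}$. Since $[E_i,E_j]=0$ whenever $1\notin\{i,j\}$, each of the four identities for $i\geq 2$ carries an overall factor of $y_1$, collapsing to the system
\begin{equation*}
y_1(ay_3+by_4+cy_5)=0,\qquad y_1(dy_4+fy_5)=0,\qquad gy_1y_5=0,
\end{equation*}
while the identity for $i=1$, obtained from $[Y,E_1]=-y_2(aE_3+bE_4+cE_5)-y_3(dE_4+fE_5)-gy_4E_5$, yields exactly the quadratic relation
\begin{equation*}
y_2(ay_3+by_4+cy_5)+y_3(dy_4+fy_5)+gy_4y_5=0
\end{equation*}
appearing in the statement.

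The proof then concludes with a short dichotomy on $y_1$. If $y_1\neq 0$, the positivity hypotheses $a,d,g>0$ propagate successively: $gy_1y_5=0$ forces $y_5=0$, then $d y_4=0$ gives $y_4=0$, and finally $ay_3=0$ gives $y_3=0$; the quadratic condition becomes vacuous and one concludes $Y\in\textit{span}\{E_1,E_2\}$. If $y_1=0$, the three linear identities hold automatically, so $Y$ is orthogonal to $E_1$ and only the quadratic condition remains, yielding the second branch. The only subtlety is invoking Corollary 2.7 of \cite{Yan-Deng} correctly to identify the geodesic vectors of $F$ with those of $\tilde{a}$ under the Douglas condition; once that reduction is in place, the remainder is routine bookkeeping on the brackets of $\mathfrak{l}_{5,7}$.
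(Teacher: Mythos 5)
Your proposal is correct and follows essentially the same route as the paper: reduce to geodesic vectors of $\tilde{a}$ via Corollary 2.7 of \cite{Yan-Deng}, apply the Kowalski--Vanhecke geodesic lemma to obtain the four bracket equations, and split on $y_1$. Your write-up is in fact slightly more explicit than the paper's, since you spell out how $a,d,g>0$ force $y_5=y_4=y_3=0$ when $y_1\neq 0$, a step the paper leaves to the reader.
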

\begin{proof}
By geodesic lemma in \cite{Kowalski-Vanhecke}, $Y$ is a geodesic vector of $(G,\tilde{a})$ if and only if  $\tilde{a}\left( [Y,z],Y\right) =0$ for all $z\in \mathfrak{l}_{5,7}$. Equivalently,  $Y$ is a geodesic vector of $(G,\tilde{a})$ if and only if $\tilde{a}\left( [Y,E_i],Y\right) =0$, $i=1,...,5$. Hence, $Y=y_1E_1+y_2E_2+y_3E_3+y_4E_4+y_5E_5$ is a  geodesic vector of $(G,\tilde{a})$ if and only if the following equations hold.
\begin{equation}
 \left\{
\begin{array}{rl}
 & y_2\left( ay_3+by_4+cy_5 \right)+y_3\left( dy_4+fy_5\right) +gy_4y_5=0,\\

& y_1\left( ay_3+by_4+cy_5 \right)=0,\\
& y_1\left( dy_4+fy_5\right)=0,\\
&gy_1y_5=0.
\end{array} \right.
\end{equation}
This implies that if $y_1\neq 0$ then $Y\in \textit{span}\{E_1,E_2\}$ and otherwise $y_2\left( ay_3+by_4+cy_5 \right)+y_3\left( dy_4+fy_5\right) +gy_4y_5=0$. 
  Now, Corollary 2.7 in \cite{Yan-Deng} completes the proof.
\end{proof}
Now, we compute the $S$-curvature.
\begin{theorem} \label{S-curvature}
Let $(G,F)$ be a five dimensional nilpotent Lie group with Lie algebra $\mathfrak{l}_{5,7}$ and $F$ be a left invariant non-Riemannian Randers metric of Douglas type, induced by a left invariant Riemannian metric $\tilde{a} $ and a left invariant vector field $X=\lambda _1E_1+\lambda _2E_2$.
Then, the $S$-curvature is given by
\begin{equation} \label{S-curvature5,7}
S(Y)=3\bigg\{ \dfrac{(y_1\lambda _2-y_2\lambda _1)(ay_3+by_4+cy_5)-\lambda _1y_3(dy_4+fy_5)-\lambda _1gy_4y_5}{F(Y)}\bigg\},
\end{equation}
where $Y=y_1E_1+y_2E_2+y_3E_3+y_4E_4+y_5E_5$.
\end{theorem}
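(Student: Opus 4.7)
The plan is to apply the general $S$-curvature formula for a left invariant Randers metric,
\[
S(Y)=\dfrac{n+1}{2}\bigg\{\dfrac{\tilde{a}\bigl([X,Y],\tilde{a}(Y,X)X-Y\bigr)}{F(Y)}-\tilde{a}\bigl([X,Y],X\bigr)\bigg\},
\]
recorded in the introduction, with $n=5$ (so the prefactor becomes $3$) and then to exploit the special form of $X$ forced by Lemma~\ref{Douglas metric}. Since $F$ is a Douglas Randers metric, that lemma gives $X=\lambda_{1}E_{1}+\lambda_{2}E_{2}$, so every computation reduces to bracket arithmetic in $\mathfrak{l}_{5,7}$ against the orthonormal basis $\{E_1,\dots,E_5\}$.

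The first main step is to compute $[X,Y]$ explicitly. Using the relations $[E_{1},E_{2}]=aE_{3}+bE_{4}+cE_{5}$, $[E_{1},E_{3}]=dE_{4}+fE_{5}$, $[E_{1},E_{4}]=gE_{5}$, and noting that $[E_{2},E_{j}]=0$ for $j\ge3$, one obtains
\[
[X,Y]=(\lambda_{1}y_{2}-\lambda_{2}y_{1})(aE_{3}+bE_{4}+cE_{5})+\lambda_{1}y_{3}(dE_{4}+fE_{5})+\lambda_{1}gy_{4}E_{5}.
\]
The crucial structural observation is that $[X,Y]\in\operatorname{span}\{E_{3},E_{4},E_{5}\}$, which is precisely the derived subalgebra and hence orthogonal to $X$.

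This orthogonality immediately collapses two pieces of the $S$-curvature formula: the term $\tilde{a}([X,Y],X)$ vanishes, and the term $\tilde{a}(Y,X)X$ appearing inside the bracketed expression also contributes nothing because $\tilde{a}(Y,X)X\in\operatorname{span}\{E_{1},E_{2}\}$ is orthogonal to $[X,Y]$. Therefore the formula reduces to
\[
S(Y)=3\,\dfrac{-\tilde{a}([X,Y],Y)}{F(Y)}.
\]

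The final step is the routine expansion of $-\tilde{a}([X,Y],Y)$ using the orthonormality of the basis: each $E_{i}$-component of $[X,Y]$ is paired with $y_{i}$, giving
\[
-(\lambda_{1}y_{2}-\lambda_{2}y_{1})(ay_{3}+by_{4}+cy_{5})-\lambda_{1}y_{3}(dy_{4}+fy_{5})-\lambda_{1}gy_{4}y_{5},
\]
which after rewriting $-(\lambda_{1}y_{2}-\lambda_{2}y_{1})=y_{1}\lambda_{2}-y_{2}\lambda_{1}$ matches the claimed expression. There is no genuine obstacle here; the only point that requires care is keeping the sign convention in $[X,Y]$ consistent and tracking which terms in the general $S$-curvature formula vanish thanks to $X\perp[\mathfrak{l}_{5,7},\mathfrak{l}_{5,7}]$.
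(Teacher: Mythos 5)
Your proposal is correct and follows essentially the same route as the paper: both apply the general left invariant Randers $S$-curvature formula with $n=5$, use the orthogonality of $X$ to the derived subalgebra $[\mathfrak{l}_{5,7},\mathfrak{l}_{5,7}]=\operatorname{span}\{E_3,E_4,E_5\}$ to reduce it to $S(Y)=3\,\tilde{a}([Y,X],Y)/F(Y)$, and then expand the bracket. Your version simply spells out the bracket computation that the paper leaves as "a straightforward computation."
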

\begin{proof}
According to the proposition 7.5 in \cite{Deng} Randers metric $F$ on $G$ has $S$-curvature
\begin{equation}
S(Y)=3\bigg\{\dfrac{\tilde{a}\left( [X,Y],\tilde{a}\left( Y,X\right)X-Y \right) }{F}-\tilde{a}\left( [X,Y],X\right) \bigg\}.
\end{equation}
Since $X$ orthogonal to $[\mathfrak{l}_{5,7},\mathfrak{l}_{5,7}]$, the above formula reduces to
\begin{equation}
S(Y)=3\bigg\{\dfrac{\tilde{a}\left( [Y,X],Y \right) }{F}\bigg\}.
\end{equation}
By a straightforward computation we get \ref{S-curvature5,7}. This completes the proof.
\end{proof}

\subsection{\textbf{Four-step nilpotent Lie groups with Lie algebra $\mathfrak{l}_{5,6}$}}
If $G$ is a Lie group with Lie algebra $\mathfrak{l}_{5,6}$ then, similar to the previous
subsection, for the Levi-civita connection and sectional curvature we have:

\begin{center}
\fontsize{6}{0}{\selectfont
\begin{tabular}{| c| c| c| c| c| c| }
\hline $\mathfrak{l}_{5,6}$  & $E_1$  & $E_2$ & $E_3$ & $E_4$ & $E_5$\\
\hline  $\nabla _{E_1}$  & $0$ & $\frac{1}{2}(aE_3+bE_4+cE_5)$ & $\frac{1}{2}(dE_4+fE_5-aE_2)$ & $-\frac{1}{2}(bE_2+dE_3-gE_5)$ & $-\frac{1}{2}(cE_2+fE_3+gE_4)$\\
\hline $\nabla _{E_2}$ & $-\frac{1}{2}(aE_3+bE_4+cE_5)$ & $0$ & $\frac{1}{2}(aE_1+hE_5)$ & $\frac{1}{2}bE_1$ & $\frac{1}{2}(cE_1-hE_3)$\\
\hline $\nabla _{E_3}$ & $-\frac{1}{2}(aE_2+dE_4+fE_5)$ & $\frac{1}{2}(aE_1-hE_5)$ & $0$ & $\frac{1}{2}dE_1$ & $\frac{1}{2}(fE_1+hE_2)$\\
\hline $\nabla _{E_4}$ & $-\frac{1}{2}(bE_2+dE_3+gE_5)$ &  $\frac{1}{2}bE_1$ & $\frac{1}{2}dE_1$ & $0$ & $\frac{1}{2}gE_1$\\
\hline $\nabla _{E_5}$ & $-\frac{1}{2}(cE_2+fE_3+gE_4)$ & $\frac{1}{2}(cE_1-hE_3)$ & $\frac{1}{2}(fE_1+hE_2)$ & $\frac{1}{2}gE_1$ &$0$\\
\hline
\end{tabular}}
\end{center}

\hspace{3cm}\\

\begin{center}
\fontsize{8}{0}{\selectfont
\begin{tabular}{| c| c| c| c| c| c| }
\hline $\mathfrak{l}_{5,6}$  & $E_1$  & $E_2$ & $E_3$ & $E_4$ & $E_5$\\
\hline  $K(E_1,.)$  & & $-\frac{3}{4}(a^2+b^2+c^2)$ & $-\frac{1}{4}(3d^2+3f^2-a^2)$ & $\frac{1}{4}(b^2+d^2-3g^2)$ & $\frac{1}{4}(c^2+f^2+g^2)$\\
\hline $K(E_2,.)$ & $*$ &  & $\frac{1}{4}(a^2-3h^2)$ & $\frac{1}{4}b^2$ & $\frac{1}{4}(c^2+h^2)$\\
\hline $K(E_3,.)$ & $*$ & $*$ &  & $\frac{1}{4}d^2$ & $\frac{1}{4}(f^2+h^2)$\\
\hline $K(E_4,.)$ & $*$ &  $*$ & $*$ &  & $\frac{1}{4}g^2$\\
\hline $K(E_5,.)$ & $*$ & $*$ & $*$ & $*$ &\\
\hline
\end{tabular}}
\end{center}
Also, the Ricci curvature and scalar curvature are of the following forms.
 \begin{center}
 \fontsize{5}{0}{\selectfont
\begin{tabular}{| c| c| c| c| c| c| }
\hline $\mathfrak{l}_{5,6}$  & $E_1$  & $E_2$ & $E_3$ & $E_4$ & $E_5$\\
\hline  $Ric(.)$  & $-\frac{1}{2}(a^2+b^2+c^2+d^2+f^2+g^2)$ & $-\frac{1}{2}(a^2+b^2+c^2+h^2)$ & $\frac{1}{2}(a^2-d^2-f^2-h^2)$ & $\frac{1}{2}(b^2+d^2-g^2)$ & $\frac{1}{2}(c^2+f^2+g^2+h^2)$\\
\hline
\end{tabular}}
\end{center}
$\textsc{Scalar curvature}=-\frac{1}{2}(a^2+b^2+c^2+d^2+f^2+g^2+h^2)<0$

\begin{lem}
Let $G$ be a five dimensional nilpotent Lie group with Lie algebra $\mathfrak{l}_{5,6}$ equipped with a left invariant $(\alpha ,\beta )$-metric $F$. 
 Then, $F$ is of Douglas type if and only if $X\in \textit{span}\{E_1,E_2\}$.
\end{lem}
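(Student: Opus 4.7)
The plan is to mimic the proof of Lemma~3.2 word for word, since the only structural difference between $\mathfrak{l}_{5,6}$ and $\mathfrak{l}_{5,7}$ is the extra bracket $[E_2,E_3]=hE_5$, which does not enlarge the derived subalgebra beyond $\mathrm{span}\{E_3,E_4,E_5\}$. First I would invoke Theorem~3.2 of \cite{An-Deng Monatsh}, which says that a left invariant $(\alpha,\beta)$-metric $F$ on a Lie group is of Douglas type if and only if the dual vector field $X$ is $\tilde{a}$-orthogonal to the derived subalgebra $[\mathfrak{l}_{5,6},\mathfrak{l}_{5,6}]$. So the task reduces to identifying that derived subalgebra.

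Next I would compute $[\mathfrak{l}_{5,6},\mathfrak{l}_{5,6}]$ directly from the four nonzero brackets given in the classification. The generators of the derived algebra are
\begin{equation*}
aE_3+bE_4+cE_5,\qquad dE_4+fE_5,\qquad gE_5,\qquad hE_5.
\end{equation*}
Since $g>0$ (and also $h>0$), the last two generators give $E_5\in[\mathfrak{l}_{5,6},\mathfrak{l}_{5,6}]$. Because $d>0$, subtracting a multiple of $E_5$ from $dE_4+fE_5$ yields $E_4\in[\mathfrak{l}_{5,6},\mathfrak{l}_{5,6}]$. Finally, $a>0$ combined with $E_4,E_5\in[\mathfrak{l}_{5,6},\mathfrak{l}_{5,6}]$ extracts $E_3$ from $aE_3+bE_4+cE_5$. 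Hence
\begin{equation*}
[\mathfrak{l}_{5,6},\mathfrak{l}_{5,6}]=\mathrm{span}\{E_3,E_4,E_5\},
\end{equation*}
and its $\tilde{a}$-orthogonal complement in $\mathfrak{l}_{5,6}$ is $\mathrm{span}\{E_1,E_2\}$ since $\{E_1,\dots,E_5\}$ is orthonormal.

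Combining the two steps, $F$ is Douglas if and only if $X\in\mathrm{span}\{E_1,E_2\}$, which is the statement of the lemma. The only point that requires a little care is the argument that the possibly-zero parameters $b$, $c$, $f$ do not matter: the positivity assumptions $a,d,g,h>0$ from the Figula--Nagy classification are exactly what guarantees that the derived algebra attains full span $\{E_3,E_4,E_5\}$, so there is no obstacle here. The rest is a direct invocation of the An--Deng criterion.
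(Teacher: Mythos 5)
Your proposal is correct and follows exactly the same route as the paper: both reduce the lemma to the An--Deng criterion that $F$ is Douglas if and only if $X\perp[\mathfrak{l}_{5,6},\mathfrak{l}_{5,6}]$, and then identify $[\mathfrak{l}_{5,6},\mathfrak{l}_{5,6}]=\mathrm{span}\{E_3,E_4,E_5\}$ from the positivity of $a,d,g$ in the Figula--Nagy normal form. The paper simply compresses this to the one-line observation that $[\mathfrak{l}_{5,6},\mathfrak{l}_{5,6}]^{\perp}=\mathrm{span}\{E_1,E_2\}$, so your version is just a more explicit writing of the same argument.
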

\begin{proof}
It is sufficient to notice the fact that $[\mathfrak{l}_{5,6} , \mathfrak{l}_{5,6}]^{\perp}=\textit{span}\{ E_1,E_2\}$.
\end{proof}
Similar to the case $\mathfrak{l}_{5,7}$, we see that all Douglas $(\alpha ,\beta )$-metrics are of Randers type.
\begin{theorem}
Let $F$ be any left invariant non-Riemannian  $(\alpha ,\beta )$-metric of Douglas type on $G$. 
 Then, $F$ must be a  Randers metric.
\end{theorem}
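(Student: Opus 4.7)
The plan is to mirror exactly the argument used for $\mathfrak{l}_{5,7}$ in Theorem \ref{non-Riemannian metric of Douglas type}, adapting the test vectors to the slightly richer bracket structure of $\mathfrak{l}_{5,6}$. The starting point is the dichotomy from Theorem 1.1 of \cite{Liu-Deng}: any homogeneous $(\alpha,\beta)$-metric of Douglas type is either a Randers metric or a Berwald metric. Assuming $F$ is not Randers, we are forced into the Berwald case and aim for a contradiction.

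Once in the Berwald case, I would invoke Proposition 4.1 of \cite{DeHoLiSa}, which characterises Berwald $(\alpha,\beta)$-metrics by two conditions on $X$: first, $X$ is orthogonal to the derived subalgebra, and second,
\begin{equation}
\tilde{a}([X,Z],Y)+\tilde{a}([X,Y],Z)=0,\qquad \forall\, Y,Z \in \mathfrak{l}_{5,6}.
\end{equation}
By the preceding lemma, orthogonality to $[\mathfrak{l}_{5,6},\mathfrak{l}_{5,6}]=\textit{span}\{E_3,E_4,E_5\}$ immediately restricts $X$ to the form $X=\lambda_1 E_1+\lambda_2 E_2$ with $\lambda_1,\lambda_2\in\mathbb{R}$.

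The remaining step is to plug convenient basis pairs into the skew-symmetry identity to pin down $\lambda_1$ and $\lambda_2$. Taking $Z=E_5$, $Y=E_4$ and using that $E_5$ lies in the centre of $\mathfrak{l}_{5,6}$ yields $\tilde{a}([X,E_4],E_5)=\lambda_1 g=0$, and since $g>0$ this forces $\lambda_1=0$. Then choosing $Z=E_1$, $Y=E_3$ and computing $[X,E_1]=-\lambda_2(aE_3+bE_4+cE_5)$ gives $-\lambda_2 a=0$, so $\lambda_2=0$ as well (alternatively, the new bracket $[E_2,E_3]=hE_5$ provides an even cleaner route: the pair $Z=E_5$, $Y=E_3$ directly delivers $\lambda_2 h=0$). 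Either way $X=0$, so $F$ is Riemannian, contradicting our assumption.

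No step should be a genuine obstacle: the dichotomy theorem and the Berwald characterisation reduce the problem to linear algebra on the structure constants, and the strict positivity assumptions $a,d,g,h>0$ in the description of $\mathfrak{l}_{5,6}$ guarantee that the two test pairs succeed in killing both coefficients. The only thing worth checking carefully is that the extra bracket $[E_2,E_3]=hE_5$ present in $\mathfrak{l}_{5,6}$ (but absent from $\mathfrak{l}_{5,7}$) does not spoil the earlier computations; it does not, because it only contributes to components of $[X,\cdot]$ in directions that strengthen rather than weaken the resulting constraints on $\lambda_1,\lambda_2$.
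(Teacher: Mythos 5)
Your proposal is correct and follows essentially the same route as the paper, which simply declares this case ``similar to'' the $\mathfrak{l}_{5,7}$ argument: the Liu--Deng dichotomy, the Berwald characterisation from Proposition 4.1 of \cite{DeHoLiSa}, and the test pairs $(Z,Y)=(E_5,E_4)$ and $(E_1,E_3)$ killing $\lambda_1$ and $\lambda_2$ via $g>0$ and $a>0$. Your explicit check that the extra bracket $[E_2,E_3]=hE_5$ does not interfere is exactly the verification the paper leaves implicit.
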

\begin{proof}
The proof of this theorem is similar to that of theorem \ref{non-Riemannian metric of Douglas type}, so we omit it.
\end{proof}
In the case of $\mathfrak{l}_{5,6}$, the geodesic vectors are given in the next theorem.
\begin{theorem}
Suppose $G$ is a five dimensional nilpotent Lie group with Lie algebra $\mathfrak{l}_{5,6}$ equipped with a Randers
metric of Douglas type. 
Then, $Y=y_1E_1+y_2E_2+y_3E_3+y_4E_4+y_5E_5$ is a geodesic vector of $(G,F)$ if and only if $Y\in \textit{span}\{E_1,E_2\}$ or $Y \in \textit{span}\{E_2,E_3,E_4\}$ and
$ay_2y_3+by_2y_4+dy_3y_4=0$ or $Y\in \textit{span}\{E_5\}$.
\end{theorem}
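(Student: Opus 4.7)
The plan is to mirror the strategy used in the $\mathfrak{l}_{5,7}$ case. By the previous lemma, $X\in\textit{span}\{E_1,E_2\}$, so $X$ is orthogonal to the derived subalgebra, and Corollary~2.7 of \cite{Yan-Deng} reduces the problem to finding the geodesic vectors of the underlying Riemannian metric $\tilde{a}$. By the geodesic lemma of \cite{Kowalski-Vanhecke}, $Y$ is such a geodesic vector if and only if $\tilde{a}([Y,E_i],Y)=0$ for $i=1,\dots,5$. Since $E_5$ is central in $\mathfrak{l}_{5,6}$, the $i=5$ equation is automatic, leaving exactly four conditions.

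First I would expand $[Y,E_i]$ for $i=1,2,3,4$ using the bracket relations of $\mathfrak{l}_{5,6}$; the extra relation $[E_2,E_3]=hE_5$ (compared with $\mathfrak{l}_{5,7}$) contributes a $-hy_3y_5$ term to the $E_2$ equation and an $+hy_2y_5$ term to the $E_3$ equation. Pairing with $Y$ yields the system
\begin{equation*}
\left\{
\begin{array}{l}
ay_2y_3+by_2y_4+cy_2y_5+dy_3y_4+fy_3y_5+gy_4y_5=0,\\
y_1(ay_3+by_4+cy_5)-hy_3y_5=0,\\
y_1(dy_4+fy_5)+hy_2y_5=0,\\
gy_1y_5=0.
\end{array}
\right.
\end{equation*}

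The next step is a case split on $y_1$. If $y_1\ne 0$, then $g>0$ in the last equation forces $y_5=0$; substituting back, $d>0$ in the third equation yields $y_4=0$, and then $a>0$ in the second yields $y_3=0$, so $Y\in\textit{span}\{E_1,E_2\}$. If $y_1=0$, the second and third equations collapse to $hy_3y_5=0$ and $hy_2y_5=0$, and since $h>0$ we split further on $y_5$: when $y_5\ne 0$ we must have $y_2=y_3=0$, and then the first equation reduces to $gy_4y_5=0$ forcing $y_4=0$, giving $Y\in\textit{span}\{E_5\}$; when $y_5=0$, what remains is precisely the single quadratic condition $ay_2y_3+by_2y_4+dy_3y_4=0$ with $Y\in\textit{span}\{E_2,E_3,E_4\}$.

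The main delicate point compared with the $\mathfrak{l}_{5,7}$ case is keeping track of the coupling introduced by the $h$-terms: these cross-link the $E_2$- and $E_3$-equations through the common factor $y_5$, so the argument naturally produces three distinct families of geodesic vectors rather than two. Once the case analysis is executed carefully, Corollary~2.7 of \cite{Yan-Deng} immediately transfers the conclusion from $(G,\tilde{a})$ to $(G,F)$ and finishes the proof.
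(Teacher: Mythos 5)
Your proposal is correct and follows essentially the same route as the paper: reduce to geodesic vectors of $\tilde{a}$ via Corollary 2.7 of Yan--Deng (using that $X$ is orthogonal to the derived subalgebra), write down the four equations from the geodesic lemma, and conclude. The only difference is that you carry out the case split on $y_1$ and $y_5$ explicitly, which the paper leaves implicit; your system of equations and the resulting three families agree with the paper's.
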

\begin{proof}
 Similar to the proof of theorem \ref{geodesic vector} we can see $Y=y_1E_1+y_2E_2+y_3E_3+y_4E_4+y_5E_5$ is a  geodesic vector of $(G,\tilde{a})$ if and only if
\begin{equation}
 \left\{
\begin{array}{rl}
 & y_2\left( ay_3+by_4+cy_5 \right)+y_3\left( dy_4+fy_5\right) +gy_4y_5=0,\\

& y_1\left( ay_3+by_4+cy_5 \right)-hy_3y_5=0,\\
& y_1\left( dy_4+fy_5\right)+hy_2y_5=0,\\
&gy_1y_5=0.
\end{array} \right.
\end{equation}
Now, Corollary 2.7 in \cite{Yan-Deng} completes the proof.
\end{proof}
For the $S$-curvature quantity we have:
\begin{theorem}
Assume that $(G,F)$ is a five dimensional nilpotent Lie group with Lie algebra $\mathfrak{l}_{5,6}$ and $F$ be a left invariant non-Riemannian Randers metric of Douglas type, induced by a left invariant Riemannian metric $\tilde{a} $ and a left invariant vector field $X=\lambda _1E_1+\lambda _2E_2$.
The $S$-curvature is given by
\begin{equation} \label{S-curvature5,7}
S(Y)=3\bigg\{ \dfrac{(y_1\lambda _2-y_2\lambda _1)(ay_3+by_4+cy_5)-\lambda _1y_3(dy_4+fy_5)-y_5(h\lambda _2y_3+g\lambda _1y_4)}{F(Y)}\bigg\},
\end{equation}
where $Y=y_1E_1+y_2E_2+y_3E_3+y_4E_4+y_5E_5$.
\end{theorem}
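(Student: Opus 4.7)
The plan is to follow exactly the strategy used for $\mathfrak{l}_{5,7}$ in Theorem \ref{S-curvature}, treating the extra bracket $[E_2,E_3]=hE_5$ as the only novelty. The starting point is the general Randers $S$-curvature formula (recalled in the introduction and in \cite{Deng}):
\begin{equation*}
S(Y)=3\left\{\frac{\tilde{a}\bigl([X,Y],\,\tilde{a}(Y,X)X-Y\bigr)}{F(Y)}-\tilde{a}([X,Y],X)\right\}.
\end{equation*}

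By the preceding lemma, $F$ being Douglas forces $X=\lambda_1 E_1+\lambda_2 E_2$, i.e.\ $X\in[\mathfrak{l}_{5,6},\mathfrak{l}_{5,6}]^{\perp}=\mathrm{span}\{E_1,E_2\}$. For any $Y\in\mathfrak{l}_{5,6}$ the vector $[X,Y]$ lies in the derived subalgebra $\mathrm{span}\{E_3,E_4,E_5\}$, so both $\tilde{a}([X,Y],X)=0$ and $\tilde{a}([X,Y],X)\tilde{a}(Y,X)=0$. Hence the formula collapses to
\begin{equation*}
S(Y)=3\,\frac{\tilde{a}([Y,X],Y)}{F(Y)},
\end{equation*}
exactly as in the $\mathfrak{l}_{5,7}$ case; the contribution of the new bracket $[E_2,E_3]=hE_5$ does not spoil this reduction because $E_5$ is still in the derived algebra and $X$ has no $E_5$-component.

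What remains is a direct computation of $[Y,X]$ using the structure constants of $\mathfrak{l}_{5,6}$. Expanding
\begin{equation*}
[Y,X]=\sum_{i,j} y_i\lambda_j[E_i,E_j]
\end{equation*}
and keeping only the nonzero brackets $[E_1,E_2]$, $[E_1,E_3]$, $[E_1,E_4]$ and the new one $[E_2,E_3]=hE_5$, one finds
\begin{equation*}
[Y,X]=(y_1\lambda_2-y_2\lambda_1)(aE_3+bE_4+cE_5)-\lambda_1 y_3(dE_4+fE_5)-(h\lambda_2 y_3+g\lambda_1 y_4)E_5.
\end{equation*}
Since $\{E_1,\dots,E_5\}$ is orthonormal with respect to $\tilde{a}$, taking the inner product with $Y$ simply contracts the $E_3,E_4,E_5$ coefficients against $y_3,y_4,y_5$ and yields the stated numerator.

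The only step requiring any care is the bookkeeping of signs and the identification of which brackets produce an $E_5$-coefficient, since both $[E_1,E_4]=gE_5$ and $[E_2,E_3]=hE_5$ now feed into the last term; this is where the formula differs from the $\mathfrak{l}_{5,7}$ analogue and is the main source of potential sign errors, but it is otherwise a short calculation.
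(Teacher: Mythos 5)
Your proposal is correct and follows exactly the approach the paper uses for the analogous $\mathfrak{l}_{5,7}$ statement (Proposition 7.5 of Deng, reduction to $S(Y)=3\,\tilde{a}([Y,X],Y)/F(Y)$ via $X\perp[\mathfrak{g},\mathfrak{g}]$, then direct expansion of $[Y,X]$); the paper simply omits the proof for $\mathfrak{l}_{5,6}$, and your bracket computation, including the extra $h\lambda_2 y_3 E_5$ term from $[E_2,E_3]=hE_5$, reproduces the stated numerator.
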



\subsection{\textbf{Three-step nilpotent Lie groups with Lie algebra $\mathfrak{l}_{5,5}$}}
During the next two subsections we study five dimensional three-step nilpotent Lie groups.
For the Levi-Civita connection and sectional curvature of left invariant Riemannian metrics on Lie groups with Lie
algebra $\mathfrak{l}_{5,5}$ we have:

\begin{center}
\fontsize{8}{0}{\selectfont
\begin{tabular}{| c| c| c| c| c| c| }
\hline $\mathfrak{l}_{5,5}$  & $E_1$  & $E_2$ & $E_3$ & $E_4$ & $E_5$\\
\hline  $\nabla _{E_1}$  & $0$ & $\frac{1}{2}(aE_4+bE_5)$ & $\frac{1}{2}cE_5$ & $\frac{1}{2}(dE_5-aE_2)$ & $-\frac{1}{2}(bE_2+cE_3+dE_4)$\\
\hline $\nabla _{E_2}$ & $-\frac{1}{2}(aE_4+bE_5)$  & $0$ & $\frac{1}{2}eE_5$ & $\frac{1}{2}aE_1$ & $\frac{1}{2}(bE_1-eE_3)$\\
\hline $\nabla _{E_3}$ & $-\frac{1}{2}cE_5$ & $-\frac{1}{2}eE_5$ & $0$ & $0$ & $\frac{1}{2}(cE_1+eE_2)$\\
\hline $\nabla _{E_4}$ & $-\frac{1}{2}(aE_2+dE_5)$ &  $\frac{1}{2}aE_1$ & $0$ & $0$ & $\frac{1}{2}dE_1$\\
\hline $\nabla _{E_5}$ & $-\frac{1}{2}(bE_2+cE_3+dE_4)$ & $\frac{1}{2}(bE_1-eE_3)$ & $\frac{1}{2}(cE_1+eE_2)$ & $\frac{1}{2}dE_1$ &$0$\\
\hline
\end{tabular}}
\end{center}

\hspace{3cm}\\
\begin{center}
\fontsize{8}{0}{\selectfont
\begin{tabular}{| c| c| c| c| c| c| }
\hline $\mathfrak{l}_{5,5}$  & $E_1$  & $E_2$ & $E_3$ & $E_4$ & $E_5$\\
\hline  $K(E_1,.)$  & & $-\frac{3}{4}(a^2+b^2)$ & $-\frac{3}{4}c^2$ & $\frac{1}{4}(a^2-3d^2)$ & $\frac{1}{4}(b^2+c^2+d^2)$\\
\hline $K(E_2,.)$ & $*$ &  & $-\frac{3}{4}e^2$ & $\frac{1}{4}a^2$ & $\frac{1}{4}(b^2+e^2)$\\
\hline $K(E_3,.)$ & $*$ & $*$ &  & $0$ & $\frac{1}{4}(c^2+e^2)$\\
\hline $K(E_4,.)$ & $*$ &  $*$ & $*$ &  & $\frac{1}{4}d^2$\\
\hline $K(E_5,.)$ & $*$ & $*$ & $*$ & $*$ &\\
\hline
\end{tabular}}
\end{center}
\hspace{3cm}\\
As we have mentioned above the curvature tensor is given in the appendix.
Using the above tables the Ricci and scalar curvatures are of the following forms:
 \begin{center}
 \fontsize{8}{0}{\selectfont
\begin{tabular}{| c| c| c| c| c| c| }
\hline $\mathfrak{l}_{5,5}$  & $E_1$  & $E_2$ & $E_3$ & $E_4$ & $E_5$\\
\hline  $Ric(.)$  & $-\frac{1}{2}(a^2+b^2+c^2+d^2)$ & $-\frac{1}{2}(a^2+b^2+e^2)$ & $-\frac{1}{2}(c^2+e^2)$ & $\frac{1}{2}(a^2-d^2)$ & $\frac{1}{2}(b^2+c^2+d^2+e^2)$\\
\hline
\end{tabular}}
\end{center}

$\textsc{scalar curvature}=-\frac{1}{2}(a^2+b^2+c^2+d^2+e^2)<0$

\begin{lem}
Let $G$ be a five dimensional nilpotent Lie group with Lie algebra $\mathfrak{l}_{5,5}$ equipped with a left invariant $(\alpha ,\beta )$-metric $F$. 
Then, $F$ is of Douglas type if and only if $X\in \textit{span}\{E_1,E_2,E_3\}$.
\end{lem}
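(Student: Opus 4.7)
The plan is to follow the same template used in the earlier lemmas of the paper: invoke Theorem 3.2 of \cite{An-Deng Monatsh}, which characterizes left invariant Douglas $(\alpha,\beta)$-metrics on a Lie group as exactly those whose defining vector field $X$ is orthogonal (with respect to the underlying left invariant Riemannian metric $\tilde{a}$) to the derived subalgebra. Thus the lemma reduces to the purely algebraic task of computing $[\mathfrak{l}_{5,5},\mathfrak{l}_{5,5}]^{\perp}$ with respect to the inner product for which $\{E_1,\dots,E_5\}$ is orthonormal.

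First I would list the nonzero brackets defining $\mathfrak{l}_{5,5}$, namely $[E_1,E_2]=aE_4+bE_5$, $[E_1,E_3]=cE_5$, $[E_1,E_4]=dE_5$, and $[E_2,E_3]=eE_5$, so the derived subalgebra is spanned by the four vectors $aE_4+bE_5$, $cE_5$, $dE_5$, $eE_5$. Next I would use the positivity hypotheses $a,d,e>0$: the vector $dE_5$ already gives $E_5\in[\mathfrak{l}_{5,5},\mathfrak{l}_{5,5}]$, and then subtracting $b\,E_5$ from $aE_4+bE_5$ and dividing by $a$ shows that $E_4$ also lies in the derived subalgebra. Conversely, all four generators lie in $\text{span}\{E_4,E_5\}$, so we conclude $[\mathfrak{l}_{5,5},\mathfrak{l}_{5,5}]=\text{span}\{E_4,E_5\}$.

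Finally, since $\{E_1,\dots,E_5\}$ is $\tilde{a}$-orthonormal, the orthogonal complement is immediate: $[\mathfrak{l}_{5,5},\mathfrak{l}_{5,5}]^{\perp}=\text{span}\{E_1,E_2,E_3\}$. Combining this with the An--Deng criterion yields that $F$ is of Douglas type if and only if $X\in\text{span}\{E_1,E_2,E_3\}$, which is the claim.

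There is really no obstacle here; the only point that requires a moment of care is verifying that the parameters $b$ and $c$ (which may vanish) do not shrink the derived subalgebra, and this is handled by the strict positivity of $a$, $d$, and $e$. Everything else is a one-line application of the previously cited theorem, exactly parallel to the proofs of Lemma \ref{Douglas metric} and its $\mathfrak{l}_{5,6}$ analogue.
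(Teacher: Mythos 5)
Your proof is correct and follows exactly the paper's route: the paper's own proof is the one-line observation that $[\mathfrak{l}_{5,5},\mathfrak{l}_{5,5}]^{\perp}=\textit{span}\{E_1,E_2,E_3\}$, combined with the same An--Deng orthogonality criterion already invoked in Lemma~\ref{Douglas metric}. You merely spell out the computation of the derived subalgebra (correctly using $a,d,e>0$ to get $\textit{span}\{E_4,E_5\}$), which the paper leaves implicit.
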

\begin{proof}
It is sufficient to notice the fact that $[\mathfrak{l}_{5,5} , \mathfrak{l}_{5,5}]^{\perp}=\textit{span}\{ E_1,E_2,E_3\}$.
\end{proof}
In the following theorem we see that, similar to the above cases, there is not a non-Randers Douglas $(\alpha ,\beta )$-metric.
\begin{theorem}
Suppose $F$ is any left invariant non-Riemannian  $(\alpha ,\beta )$-metric of Douglas type on $G$. 
 Then, $F$ must be a  Randers metric.
\end{theorem}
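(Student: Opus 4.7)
The plan is to mirror exactly the argument of Theorem \ref{non-Riemannian metric of Douglas type} (the $\mathfrak{l}_{5,7}$ case), adapted to the bracket structure of $\mathfrak{l}_{5,5}$. First I invoke Theorem 1.1 of \cite{Liu-Deng}: any homogeneous Douglas $(\alpha,\beta)$-metric is either Randers or Berwald. Assuming for contradiction that $F$ is not Randers, then $F$ is Berwald, and by Proposition 4.1 of \cite{DeHoLiSa} the vector field $X$ is orthogonal to $[\mathfrak{l}_{5,5},\mathfrak{l}_{5,5}]$ and satisfies
\begin{equation*}
\tilde{a}\bigl([X,Y],Z\bigr)+\tilde{a}\bigl([X,Z],Y\bigr)=0, \qquad \forall\, Y,Z\in\mathfrak{l}_{5,5}.
\end{equation*}
By the preceding lemma, orthogonality to the derived subalgebra forces $X=\lambda_1 E_1+\lambda_2 E_2+\lambda_3 E_3$ for some $\lambda_1,\lambda_2,\lambda_3\in\mathbb{R}$.

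Next I exploit the skew-symmetry condition by substituting well-chosen basis pairs. Since $E_5$ lies in the center of $\mathfrak{l}_{5,5}$, the bracket $[X,E_5]$ vanishes, so each pair $(Y,Z)=(E_k,E_5)$ collapses to a single term involving $[X,E_k]$. I would take the pairs in sequence $(E_4,E_5)$, $(E_3,E_5)$, $(E_2,E_5)$: the first extracts the bracket $[E_1,E_4]=dE_5$ and yields $\lambda_1 d=0$, forcing $\lambda_1=0$ since $d>0$; the second extracts $[E_2,E_3]=eE_5$ and (using $\lambda_1=0$) yields $\lambda_2 e=0$, forcing $\lambda_2=0$ since $e>0$; the third extracts $[E_3,E_2]=-eE_5$ and yields $\lambda_3 e=0$, forcing $\lambda_3=0$. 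Hence $X=0$, meaning $\beta\equiv 0$, so $F=\alpha\,\phi(0)$ is a constant multiple of $\alpha$ and therefore Riemannian, contradicting the hypothesis.

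The only subtlety worth highlighting is the choice of pairs: among the structure constants $a,b,c,d,e$ of $\mathfrak{l}_{5,5}$ only $a,d,e$ are guaranteed to be strictly positive (since $b,c\geq 0$ only), so the pairs must be picked so that the resulting equation involves $d$ or $e$ rather than $b$ or $c$. This is exactly why $(E_4,E_5)$, $(E_3,E_5)$, $(E_2,E_5)$ are the right choices. I do not foresee a serious obstacle; the argument is parallel to the $\mathfrak{l}_{5,7}$ case and terminates after three linear relations. As in the paper's treatment of $\mathfrak{l}_{5,6}$, it would be reasonable simply to remark that the proof is analogous to Theorem \ref{non-Riemannian metric of Douglas type} and omit the repeated details.
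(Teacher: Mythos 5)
Your argument is correct and is precisely the argument the paper intends: the paper's own proof of this statement merely says it is "similar to" the $\mathfrak{l}_{5,7}$ case, and you have filled in exactly that template — invoking Theorem 1.1 of \cite{Liu-Deng}, reducing to the Berwald case via Proposition 4.1 of \cite{DeHoLiSa}, and using the bracket relations of $\mathfrak{l}_{5,5}$ (correctly choosing pairs involving the strictly positive structure constants $d$ and $e$) to force $X=0$ and reach a contradiction. Your computations with the pairs $(E_4,E_5)$, $(E_3,E_5)$, $(E_2,E_5)$ check out, so nothing further is needed.
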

\begin{proof}
The proof of this theorem is similar to that of theorem \ref{non-Riemannian metric of Douglas type}, so we omit it.
\end{proof}
\begin{theorem}
Assume that $G$ is a five dimensional nilpotent Lie group with Lie algebra $\mathfrak{l}_{5,5}$ equipped with a Randers metric of Douglas type. 
 $Y$ is a geodesic vector of $(G,F)$ if and only if $Y\in \textit{span}\{E_1,E_2,E_3\}$   or $Y \in \textit{span}\{E_3,E_4\}$ or $Y \in \textit{span}\{E_5\}$.
\end{theorem}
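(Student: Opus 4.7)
The plan is to reuse the template established in the proof of Theorem 3.3: first characterize the geodesic vectors of the underlying Riemannian manifold $(G,\tilde{a})$, and then invoke Corollary 2.7 of Yan--Deng to transfer the description to the Randers metric $F$. The preceding lemma already places the defining vector $X$ of a Douglas metric in $[\mathfrak{l}_{5,5},\mathfrak{l}_{5,5}]^{\perp}=\textit{span}\{E_1,E_2,E_3\}$, which is exactly the hypothesis needed for that corollary, so this last step is automatic once the Riemannian analysis is done.

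For the Riemannian part I would apply the Kowalski--Vanhecke geodesic lemma: $Y=y_1E_1+\cdots+y_5E_5$ is a geodesic vector of $(G,\tilde{a})$ iff $\tilde{a}([Y,E_i],Y)=0$ for $i=1,\dots,5$. Plugging in the four nonzero brackets $[E_1,E_2]=aE_4+bE_5$, $[E_1,E_3]=cE_5$, $[E_1,E_4]=dE_5$, $[E_2,E_3]=eE_5$ produces four quadratic conditions on the coefficients $y_i$, the simplest two of which are $dy_1y_5=0$ and $(cy_1+ey_2)y_5=0$. The central step is then a clean case split on whether $y_5$ vanishes. If $y_5=0$, the remaining equations collapse to $ay_1y_4=0$ and $ay_2y_4=0$, so by positivity of $a$ either $y_4=0$ and $Y\in\textit{span}\{E_1,E_2,E_3\}$, or $y_4\neq 0$ with $y_1=y_2=0$ forcing $Y\in\textit{span}\{E_3,E_4\}$. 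If $y_5\neq 0$, the positivity of $d$ and $e$ forces $y_1=0$ and $y_2=0$; substituting back, the remaining two equations successively drive $y_3$ and then $y_4$ to zero, leaving $Y\in\textit{span}\{E_5\}$.

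I expect no genuine conceptual obstacle; the only care required is the organized bookkeeping of the case split. The strict positivity of $a,d,e$ recorded in the definition of $\mathfrak{l}_{5,5}$ is what legitimizes the cancellations at each branch, and the converse direction, that every vector in any one of the three listed subspaces annihilates all four equations, amounts to a one-line substitution in each case. Finally, appealing to Corollary 2.7 of \cite{Yan-Deng} with $X\in[\mathfrak{l}_{5,5},\mathfrak{l}_{5,5}]^{\perp}$ transports the characterization from $\tilde{a}$ to $F$, completing the argument.
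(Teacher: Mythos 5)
Your proposal is correct and follows essentially the same route as the paper: derive the four quadratic conditions from the Kowalski--Vanhecke geodesic lemma using the brackets of $\mathfrak{l}_{5,5}$, and then pass from $(G,\tilde{a})$ to $(G,F)$ via Corollary 2.7 of Yan--Deng. Your explicit case split on $y_5$ (and the use of $a,d,e>0$) is exactly the bookkeeping the paper leaves implicit after displaying the system, and it checks out.
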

\begin{proof}
 Similar to the proof of theorem \ref{geodesic vector} we can see $Y=y_1E_1+y_2E_2+y_3E_3+y_4E_4+y_5E_5$ is a  geodesic vector of $(G,\tilde{a})$ if and only if
\begin{equation}
 \left\{
\begin{array}{rl}
 & y_2\left( ay_4+by_5 \right)+cy_3y_5+dy_4y_5=0,\\
& y_1\left( ay_4+by_5 \right)-ey_3y_5=0,\\
&c y_1y_5+ey_2y_5=0,\\
&dy_1y_5=0.
\end{array} \right.
\end{equation}
Now, Corollary 2.7 of \cite{Yan-Deng} completes the proof.
\end{proof}
\begin{theorem}
Let $(G,F)$ be a five dimensional nilpotent Lie group with Lie algebra $\mathfrak{l}_{5,5}$ and $F$ be a left invariant non-Riemannian Randers metric of Douglas type, induced by a left invariant Riemannian metric $\tilde{a} $ and a left invariant vector field $X=\lambda _1E_1+\lambda _2E_2+\lambda _3E_3$. Then, the $S$-curvature is given by
\begin{equation} \label{S-curvature5,7}
S(Y)=3\bigg\{ \dfrac{(y_1\lambda _2-y_2\lambda _1)(ay_4+by_5)+y_5\lambda _3(cy_1+ey_2)-y_5\lambda _1(cy_3+dy_4)-e\lambda _2y_3y_5}{F(Y)}\bigg\},
\end{equation}
where $Y=y_1E_1+y_2E_2+y_3E_3+y_4E_4+y_5E_5$.
\end{theorem}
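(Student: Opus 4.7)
My plan is to follow the same strategy used in the proof of Theorem \ref{S-curvature} for $\mathfrak{l}_{5,7}$, adapting it to the bracket relations of $\mathfrak{l}_{5,5}$. The starting point is Proposition 7.5 of \cite{Deng}, which for a Randers metric on a Lie group $G$ gives
\begin{equation*}
S(Y)=3\bigg\{\dfrac{\tilde{a}\left( [X,Y],\tilde{a}\left( Y,X\right)X-Y \right) }{F(Y)}-\tilde{a}\left( [X,Y],X\right) \bigg\}.
\end{equation*}

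The first observation is that, by the preceding lemma, the Douglas condition forces $X=\lambda_1E_1+\lambda_2E_2+\lambda_3E_3$ to lie in the orthogonal complement of $[\mathfrak{l}_{5,5},\mathfrak{l}_{5,5}]=\textit{span}\{E_4,E_5\}$. Since $[X,Y]\in[\mathfrak{l}_{5,5},\mathfrak{l}_{5,5}]$, both $\tilde{a}([X,Y],X)$ and the summand $\tilde{a}(Y,X)\,\tilde{a}([X,Y],X)$ vanish, reducing the expression to
\begin{equation*}
S(Y)=3\,\dfrac{\tilde{a}([Y,X],Y)}{F(Y)}.
\end{equation*}

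The remaining task is a direct bracket computation. Writing $Y=\sum y_iE_i$ and using the relations $[E_1,E_2]=aE_4+bE_5$, $[E_1,E_3]=cE_5$, $[E_1,E_4]=dE_5$, $[E_2,E_3]=eE_5$, one expands $[Y,X]$ as a linear combination of $E_4$ and $E_5$: the $E_4$-coefficient comes only from the $aE_4$ term of $[E_1,E_2]$ and equals $a(\lambda_2y_1-\lambda_1y_2)$, while the $E_5$-coefficient collects the $bE_5$ contribution together with the $c$, $d$, $e$ brackets. Taking the inner product with $Y$ keeps only the $y_4$- and $y_5$-components and gives exactly the numerator stated in the theorem.

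There is no real obstacle here; the argument is mechanical once the orthogonality $X\perp[\mathfrak{l}_{5,5},\mathfrak{l}_{5,5}]$ kills the two extra terms in Deng's formula. The only care needed is bookkeeping the signs and antisymmetry of the brackets when assembling the $E_5$-coefficient, since four different structure constants contribute. Substituting back into $3\tilde{a}([Y,X],Y)/F(Y)$ yields the displayed formula and completes the proof.
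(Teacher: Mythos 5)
Your proof is correct and takes essentially the same approach the paper uses for the analogous statement in the $\mathfrak{l}_{5,7}$ case (Theorem \ref{S-curvature}): Deng's formula, the reduction of both extra terms via $X\perp[\mathfrak{l}_{5,5},\mathfrak{l}_{5,5}]=\textit{span}\{E_4,E_5\}$, and a direct expansion of $[Y,X]$; the paper simply omits this proof as routine. Your bracket computation does reproduce the stated numerator, so nothing is missing.
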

\subsection{\textbf{Three-step nilpotent Lie groups with Lie algebra $\mathfrak{l}_{5,9}$}}
For the Levi-Civita connection and sectional curvature of the last case $\mathfrak{l}_{5,9}$ we have:
 \begin{center}
 \fontsize{8}{0}{\selectfont
\begin{tabular}{| c| c| c| c| c| c| }
\hline $\mathfrak{l} _{5,9}$  & $E_1$  & $E_2$ & $E_3$ & $E_4$ & $E_5$\\
\hline  $\nabla _{E_1}$  & $0$ & $\frac{1}{2}(kE_3+lE_4+mE_5)$ & $\frac{1}{2}(pE_4-kE_2)$ & $-\frac{1}{2}(lE_2+pE_3)$ & $-\frac{1}{2}mE_2$\\
\hline $\nabla _{E_2}$ & $-\frac{1}{2}(kE_3+lE_4+mE_5)$  & $0$ & $\frac{1}{2}(kE_1+qE_5)$ & $\frac{1}{2}lE_1$ & $\frac{1}{2}(mE_1-qE_3)$\\
\hline $\nabla _{E_3}$ & $-\frac{1}{2}(pE_4+kE_2)$ & $\frac{1}{2}(kE_1-qE_5)$ & $0$ & $\frac{1}{2}pE_1$ & $\frac{1}{2}qE_2$\\
\hline $\nabla _{E_4}$ & $-\frac{1}{2}(lE_2+pE_3)$ &  $\frac{1}{2}lE_1$ & $\frac{1}{2}pE_1$ & $0$ & $0$\\
\hline $\nabla _{E_5}$ & $-\frac{1}{2}mE_2$ & $\frac{1}{2}(mE_1-qE_3)$ & $\frac{1}{2}qE_2$ & $0$ &$0$\\
\hline
\end{tabular}}
\end{center}
\hspace{3cm}\\
\begin{center}
\fontsize{8}{0}{\selectfont
\begin{tabular}{| c| c| c| c| c| c| }
\hline $\mathfrak{l} _{5,9}$  & $E_1$  & $E_2$ & $E_3$ & $E_4$ & $E_5$\\
\hline  $K(E_1,.)$  & & $-\frac{3}{4}(k^2+l^2+m^2)$ & $\frac{1}{4}(k^2-3p^2)$ & $\frac{1}{4}(l^2+p^2)$ & $\frac{1}{4}m^2$\\
\hline $K(E_2,.)$ & $*$ &  & $\frac{1}{4}(k^2-3q^2)$ & $\frac{1}{4}l^2$ & $\frac{1}{4}(m^2+q^2)$\\
\hline $K(E_3,.)$ & $*$ & $*$ &  & $\frac{1}{4}p^2$ & $\frac{1}{4}q^2$\\
\hline $K(E_4,.)$ & $*$ &  $*$ & $*$ &  & $0$\\
\hline $K(E_5,.)$ & $*$ & $*$ & $*$ & $*$ &\\
\hline
\end{tabular}}
\end{center}
Also, a direct computation shows that for the Ricci curvature we have:
\begin{center}
\fontsize{8}{0}{\selectfont
\begin{tabular}{| c| c| c| c| c| c| }
\hline $\mathfrak{l} _{5,9}$  & $E_1$  & $E_2$ & $E_3$ & $E_4$ & $E_5$\\
\hline  $Ric(.)$  & $-\frac{1}{2}(k^2+l^2+m^2+p^2)$ & $-\frac{1}{2}(k^2+l^2+m^2+q^2)$ & $-\frac{1}{2}(k^2-p^2-q^2)$ & $\frac{1}{2}(l^2+p^2)$ & $\frac{1}{2}(m^2+q^2)$\\
\hline
\end{tabular}}
\end{center}
And the scalar curvature of this case is given by $\textsc{scalar curvature}=-\frac{1}{2}(k^2+l^2+m^2+p^2+q^2)<0$.
\begin{lem}
Let $G$ be a five dimensional nilpotent Lie group with Lie algebra $\mathfrak{l}_{5,9}$ equipped with a left invariant $(\alpha ,\beta )$-metric $F$. 
 Then, $F$ is of Douglas type if and only if $X\in \textit{span}\{E_1,E_2\}$.
\end{lem}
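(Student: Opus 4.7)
The plan is to mirror the strategy used in the analogous lemmas for $\mathfrak{l}_{5,7}$, $\mathfrak{l}_{5,6}$, and $\mathfrak{l}_{5,5}$, invoking Theorem~3.2 of \cite{An-Deng Monatsh}: a left invariant $(\alpha,\beta)$-metric $F$ on $G$ is of Douglas type if and only if the defining left invariant vector field $X$ is orthogonal (with respect to $\tilde{a}$) to the derived subalgebra $[\mathfrak{l}_{5,9},\mathfrak{l}_{5,9}]$. Thus the lemma reduces to identifying this orthogonal complement explicitly in the orthonormal basis $\{E_1,\dots,E_5\}$.

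First I would read off the non-zero brackets listed in the classification of $\mathfrak{l}_{5,9}$, namely $[E_1,E_2]=kE_3+lE_4+mE_5$, $[E_1,E_3]=pE_4$, and $[E_2,E_3]=qE_5$. These generate $[\mathfrak{l}_{5,9},\mathfrak{l}_{5,9}]$ as a vector subspace. Using the constraints $k>0$, $p>0$, $q>0$ that hold in both parameter regimes of $\mathfrak{l}_{5,9}$ in the Figula--Nagy list, I would argue: the bracket $[E_1,E_3]=pE_4$ immediately places $E_4$ in the derived subalgebra, and $[E_2,E_3]=qE_5$ places $E_5$ in it; then subtracting $lE_4+mE_5$ from $[E_1,E_2]$ and dividing by $k$ shows $E_3$ is also in it. Hence $[\mathfrak{l}_{5,9},\mathfrak{l}_{5,9}]=\textit{span}\{E_3,E_4,E_5\}$.

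Since $\{E_1,\dots,E_5\}$ is orthonormal, the orthogonal complement of $\textit{span}\{E_3,E_4,E_5\}$ is precisely $\textit{span}\{E_1,E_2\}$. Combining this with the cited characterization of Douglas $(\alpha,\beta)$-metrics yields the equivalence claimed in the lemma.

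No serious obstacle is expected; the only minor subtlety is verifying that the positivity conditions on $k,p,q$ are indeed sufficient in \emph{both} of the parameter families described for $\mathfrak{l}_{5,9}$ (the case $q>p>0$ with $l,m\geq 0$, and the case $q=p>0$, $m=0$, $l\geq 0$). In both cases $k,p,q$ are strictly positive, so the three independent directions $E_3,E_4,E_5$ are genuinely produced by the brackets and the argument goes through uniformly.
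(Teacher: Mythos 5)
Your proposal is correct and follows exactly the paper's route: the paper's proof likewise consists of observing that $[\mathfrak{l}_{5,9},\mathfrak{l}_{5,9}]=\textit{span}\{E_3,E_4,E_5\}$ and appealing to the orthogonality criterion of Theorem~3.2 of An--Deng (cited explicitly in the analogous lemma for $\mathfrak{l}_{5,7}$). Your extra care in checking that $k,p,q>0$ in both parameter regimes only makes explicit what the paper leaves implicit.
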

\begin{proof}
It is sufficient to notice the fact that $[\mathfrak{l}_{5,9} , \mathfrak{l}_{5,9}]=\textit{span}\{ E_3,E_4,E_5\}$.
\end{proof}
\begin{theorem}
Suppose $F$ is any left invariant non-Riemannian  $(\alpha ,\beta )$-metric of Douglas type on $G$. 
Then, $F$ must be a  Randers metric.
\end{theorem}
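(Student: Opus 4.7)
The plan is to follow the same template used for Theorem \ref{non-Riemannian metric of Douglas type} in the $\mathfrak{l}_{5,7}$ case, which works uniformly for all four Lie algebras in this section. First I would invoke Theorem 1.1 of \cite{Liu-Deng}, which forces every Douglas homogeneous $(\alpha,\beta)$-metric to be either a Randers metric or a Berwald metric. Arguing by contradiction, I would assume $F$ is not Randers, so $F$ is of Berwald type, and aim to show that $X$ is forced to vanish, contradicting the assumption that $F$ is non-Riemannian.

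Next I would apply Proposition 4.1 of \cite{DeHoLiSa} together with the preceding lemma to get the two necessary conditions: $X\in\textit{span}\{E_1,E_2\}$, say $X=\lambda_1 E_1+\lambda_2 E_2$, and the skew-symmetry identity
\begin{equation}
\tilde{a}\bigl([X,Z],Y\bigr)+\tilde{a}\bigl([X,Y],Z\bigr)=0,\qquad \forall\, Y,Z\in\mathfrak{l}_{5,9}.
\end{equation}
The strategy is now to test this identity against carefully chosen basis pairs. Since $E_4$ and $E_5$ lie in the center of $\mathfrak{l}_{5,9}$, pairs drawn from $\{E_4,E_5\}$ give trivial identities, so the useful test vectors involve $E_3$, whose brackets with $E_1$ and $E_2$ produce $pE_4$ and $qE_5$ respectively.

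Concretely, I would take $(Y,Z)=(E_3,E_1)$, compute $[X,E_1]=-\lambda_2(kE_3+lE_4+mE_5)$ and $[X,E_3]=\lambda_1 pE_4+\lambda_2 qE_5$, and pair them with $E_3$ and $E_1$ respectively; only the term $-\lambda_2 k$ survives, yielding $\lambda_2 k=0$ and hence $\lambda_2=0$ since $k>0$. The symmetric choice $(Y,Z)=(E_3,E_2)$ then produces $\lambda_1 k=0$, so $\lambda_1=0$ as well. Thus $X=0$ and $F$ is Riemannian, contradicting the hypothesis.

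There is no real obstacle here: the argument is a direct transcription of the earlier proof, and the only step requiring attention is the selection of basis pairs that actually detect $\lambda_1$ and $\lambda_2$. The positivity assumption $k>0$ built into the definition of $\mathfrak{l}_{5,9}$ is precisely what makes both coefficients collapse, so the proof goes through regardless of whether the parameters $l,m,p,q$ are in the first or second case of the Lie algebra's parameter range.
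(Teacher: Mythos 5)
Your proposal is correct and follows exactly the template the paper itself invokes (the paper omits this proof, saying it is ``similar to'' the $\mathfrak{l}_{5,7}$ case): Theorem 1.1 of \cite{Liu-Deng}, then Proposition 4.1 of \cite{DeHoLiSa} plus the lemma giving $X=\lambda_1E_1+\lambda_2E_2$, then testing the identity on suitable basis pairs. Your adaptation of the test vectors to $\mathfrak{l}_{5,9}$ (pairing $E_3$ with $E_1$ and with $E_2$, using $k>0$) is the right one, and the computations check out.
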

\begin{proof}
The proof of this theorem is similar to that of theorem \ref{non-Riemannian metric of Douglas type}, so we omit it.
\end{proof}
In the following theorem the geodesic vectors of Douglas Randers metrics are given.
\begin{theorem}
Assume that $G$ is a five dimensional nilpotent Lie group with Lie algebra $\mathfrak{l} _{5,9}$ equipped with a Randers metric of Douglas type. 
$Y$ is a geodesic vector of $(G,F)$ if and only if $Y\in \textit{span}\{E_3\}$   or $Y \in \textit{span}\{E_4,E_5\}$ or $Y \in \textit{span}\{E_2,E_3,E_4\}$ satisfying  $y_2(ky_3+ly_4)+py_3y_4=0 $ or $Y$ is orthogonal to $E_1$ and satisfies the condition
 \begin{equation}
 \left\{
\begin{array}{rl}
& y_1\left( ky_3+ly_4+my_5 \right)-qy_3y_5=0,\\
&p y_1y_4+qy_2y_5=0,
\end{array} \right.
\end{equation}
where $Y=y_1E_1+y_2E_2+y_3E_3+y_4E_4+y_5E_5$.
\end{theorem}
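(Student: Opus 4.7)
The plan is to follow the template used in the preceding four geodesic-vector theorems. First, by the geodesic lemma of \cite{Kowalski-Vanhecke}, $Y$ is a geodesic vector of $(G,\tilde{a})$ if and only if $\tilde{a}([Y,Z],Y)=0$ for every $Z\in\mathfrak{l}_{5,9}$, and by linearity it suffices to test $Z=E_1,\dots,E_5$. Reading off the structure constants of $\mathfrak{l}_{5,9}$, the only nonzero basis brackets involve the pairs $(E_1,E_2)$, $(E_1,E_3)$, $(E_2,E_3)$, so the tests against $Z=E_4$ and $Z=E_5$ vanish automatically, and I am left with precisely three scalar conditions, coming respectively from $Z=E_1,E_2,E_3$:
\begin{equation*}
y_2(ky_3+ly_4+my_5)+py_3y_4=0,\quad y_1(ky_3+ly_4+my_5)-qy_3y_5=0,\quad py_1y_4+qy_2y_5=0.
\end{equation*}

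Next I would carry out a case analysis of this polynomial system. If $y_1=0$, the second and third equations collapse to $qy_3y_5=0$ and $qy_2y_5=0$; splitting on whether $y_5$ vanishes gives either $Y\in\textit{span}\{E_2,E_3,E_4\}$ with the surviving constraint $y_2(ky_3+ly_4)+py_3y_4=0$ (with $Y\in\textit{span}\{E_3\}$ appearing as a degenerate sub-case), or else $y_2=y_3=0$, i.e.\ $Y\in\textit{span}\{E_4,E_5\}$. If instead $y_1\neq 0$, the key observation is that equation (1) becomes redundant: substituting the value of $ky_3+ly_4+my_5$ extracted from (2) into (1) produces the factor $(y_3/y_1)(py_1y_4+qy_2y_5)$, which vanishes by (3). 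Hence when $y_1\neq 0$ only the two conditions displayed in the final bullet of the theorem survive.

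To upgrade these Riemannian geodesic vectors to geodesic vectors of the Randers metric $F$, I would invoke Corollary~2.7 of \cite{Yan-Deng}: since $F$ is of Douglas type, the preceding lemma forces $X\in\textit{span}\{E_1,E_2\}$ to be orthogonal to $[\mathfrak{l}_{5,9},\mathfrak{l}_{5,9}]=\textit{span}\{E_3,E_4,E_5\}$, and the corollary then identifies the geodesic vectors of $(G,F)$ with those of $(G,\tilde{a})$, completing the argument.

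The main technical hurdle is the case analysis itself, specifically the substitution trick needed to show that equation (1) is a consequence of (2) and (3) in the $y_1\neq 0$ branch; keeping the four bullets of the statement transparently organized given their partial overlap also requires a little care.
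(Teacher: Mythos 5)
Your proposal is correct and follows essentially the same route as the paper: the same three scalar conditions $y_2(ky_3+ly_4+my_5)+py_3y_4=0$, $y_1(ky_3+ly_4+my_5)-qy_3y_5=0$, $py_1y_4+qy_2y_5=0$ obtained from the geodesic lemma, the same split into the cases $y_1=0$ and $y_1\neq 0$ (which the paper states but leaves entirely implicit, whereas you supply the substitution showing the first equation is redundant when $y_1\neq 0$), and the same appeal to Corollary 2.7 of Yan--Deng using that $X\in\textit{span}\{E_1,E_2\}$ is orthogonal to $[\mathfrak{l}_{5,9},\mathfrak{l}_{5,9}]$. The only point worth flagging is that your $y_1\neq 0$ branch matches the theorem's final bullet only if ``orthogonal to $E_1$'' there is read as ``not orthogonal to $E_1$''; as literally written that bullet would wrongly omit the first equation in the $y_1=0$ case (e.g.\ $y_1=y_5=0$, $y_2=y_3=y_4=1$ would pass it while failing to be geodesic), so your reading is the mathematically correct one and the statement contains a typo.
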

\begin{proof}
 Similar to the proof of theorem \ref{geodesic vector} we can see $Y=y_1E_1+y_2E_2+y_3E_3+y_4E_4+y_5E_5$ is a  geodesic vector of $(G,\tilde{a})$ if and only if
\begin{equation}
 \left\{
\begin{array}{rl}
 & y_2\left( ky_3+ly_4+my_5 \right)+py_3y_4=0,\\
& y_1\left( ky_3+ly_4+my_5 \right)-qy_3y_5=0,\\
&p y_1y_4+qy_2y_5=0.
\end{array} \right.
\end{equation}
 It suffices to investigate two cases  $y_1=0$ and $y_1\neq 0$. Now, Corollary 2.7 in \cite{Yan-Deng} completes the proof.
\end{proof}
And for the $S$-curvature of this space we have the following result:
\begin{theorem}
Let $(G,F)$ be a five dimensional nilpotent Lie group with Lie algebra $\mathfrak{l}_{5,9}$ and $F$ be a left invariant non-Riemannian Randers metric of Douglas type, induced by a left invariant Riemannian metric $\tilde{a} $ and a left invariant vector field $X=\lambda _1E_1+\lambda _2E_2$. Then, the S-curvature is given by
\begin{equation} \label{S-curvature5,7}
S(Y)=3\bigg\{ \dfrac{(y_1\lambda _2-y_2\lambda _1)(ky_3+ly_4+my_5)-y_3(p\lambda _1y_4+q\lambda _2y_5)}{F(Y)}\bigg\},
\end{equation}
where $Y=y_1E_1+y_2E_2+y_3E_3+y_4E_4+y_5E_5$.
\end{theorem}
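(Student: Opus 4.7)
The plan is to follow the same two-step strategy used in the proof of Theorem~\ref{S-curvature}: first invoke the general $S$-curvature formula for Randers metrics from Proposition 7.5 of \cite{Deng}, then use the Douglas hypothesis to simplify it, and finally evaluate the resulting inner product explicitly in the orthonormal basis $\{E_1,\dots,E_5\}$.

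Starting from
\[
S(Y) = 3\bigg\{\dfrac{\tilde{a}([X,Y],\, \tilde{a}(Y,X) X - Y)}{F(Y)} - \tilde{a}([X,Y], X)\bigg\},
\]
I would exploit the preceding lemma, which forces $X = \lambda_1 E_1 + \lambda_2 E_2$ to be orthogonal to the derived subalgebra $[\mathfrak{l}_{5,9},\mathfrak{l}_{5,9}] = \textit{span}\{E_3,E_4,E_5\}$. Since every bracket $[X,Y]$ sits inside this derived subalgebra, both $\tilde{a}([X,Y],X)$ and the contribution of $\tilde{a}(Y,X) X$ to the inner product in the numerator vanish, collapsing the formula to $S(Y) = 3\,\tilde{a}([Y,X],Y)/F(Y)$.

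The remaining work is a direct computation. I would expand $[Y,X]$ bilinearly using only the three nontrivial brackets of $\mathfrak{l}_{5,9}$, namely $[E_1,E_2]=kE_3+lE_4+mE_5$, $[E_1,E_3]=pE_4$, and $[E_2,E_3]=qE_5$. The only surviving contributions come from the pairs $(E_i,E_j)$ with $i\in\{1,2,3\}$ and $j\in\{1,2\}$, producing a vector lying entirely in $\textit{span}\{E_3,E_4,E_5\}$. Pairing this vector with $Y$ in the orthonormal frame then yields a sum that rearranges exactly into the claimed numerator, with the term $(y_1\lambda_2-y_2\lambda_1)(ky_3+ly_4+my_5)$ arising from the $(E_1,E_2)$ and $(E_2,E_1)$ contributions and the term $-y_3(p\lambda_1 y_4 + q\lambda_2 y_5)$ arising from the $(E_3,E_1)$ and $(E_3,E_2)$ contributions.

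I do not anticipate any genuine conceptual obstacle; the place where one could most easily slip is the signs coming from the antisymmetry of the bracket in the $[E_3,E_1]$ and $[E_3,E_2]$ contributions, so that is the step I would double-check before reading off the final expression.
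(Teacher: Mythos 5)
Your proposal is correct and follows exactly the argument the paper uses for the analogous Theorem~\ref{S-curvature} in the $\mathfrak{l}_{5,7}$ case (the paper omits the proof for $\mathfrak{l}_{5,9}$, but the intended route is the same): reduce Proposition 7.5 of \cite{Deng} via orthogonality of $X$ to $[\mathfrak{l}_{5,9},\mathfrak{l}_{5,9}]=\textit{span}\{E_3,E_4,E_5\}$ to $S(Y)=3\,\tilde{a}([Y,X],Y)/F(Y)$, then expand the bracket. Your explicit expansion, including the signs from $[E_3,E_1]$ and $[E_3,E_2]$, reproduces the stated numerator correctly.
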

\begin{remark}
The above theorems show that, against with the five dimensional two-step nilpotent Lie groups, there is not non-Randers Douglas $(\alpha ,\beta )$-metrics on five dimensional nilpotent Lie groups with nilpotency class greater than two.
\end{remark}


\appendix

\begin{center}
\fontsize{8}{0}{\selectfont
\begin{tabular}{| c| c|c|   }
\hline $\mathfrak{l}_{5,7}$  & $E_1$  & $E_2$ \\
\hline  $R(E_1,E_2)$  & $\frac{1}{4}(3(a^2+b^2+c^2)E_2+3(bd+cf)E_3+(3cg+ad)E_4+(af+bg)E_5)$ & $-\frac{3}{4}(a^2+b^2+c^2)E_1$ \\
\hline $R(E_1,E_3)$ & $\frac{1}{4}(3(bd+cf)E_2+(3f^2+3d^2-a^2)E_3+(3fg-ab)E_4+(dg-ac)E_5)$  & $-\frac{3}{4}(bd+cf)E_1$ \\
\hline $R(E_1,E_4)$ & $\frac{1}{4}((ad+3cg)E_2+(3fg-ab)E_3+(3g^2-d^2-b^2)E_4-(bc+fd)E_5)$ & $-\frac{1}{4}(ad+3cg)E_1$ \\
\hline $R(E_1,E_5)$ & $\frac{1}{4}((af+bg)E_2+(dg-ac)E_3-(bc+df)E_4-(c^2+f^2+g^2)E_5)$ &  $-\frac{1}{4}(af+bg)E_1$\\
\hline $R(E_2,E_3)$ & $0$ &  $-\frac{1}{4}(a^2E_3+abE_4+acE_5)$\\
\hline $R(E_2,E_4)$ & $0$ &  $-\frac{1}{4}(abE_3+b^2E_4+bcE_5)$ \\
\hline $R(E_2,E_5)$ & $0$ &  $-\frac{1}{4}(acE_3+bcE_4+c^2E_5)$ \\
\hline $R(E_3,E_4)$ & $0$ &  $\frac{1}{4}(adE_3-bdE_4+(ag-bf)E_5)$ \\
\hline $R(E_3,E_5)$ & $0$ &  $\frac{1}{4}(afE_3+(ag-cd)E_4-cfE_5)$ \\
\hline$R(E_4,E_5)$ & $0$ &  $\frac{1}{4}((bf-cd)E_3+bgE_4-cgE_5)$ \\
\hline
\end{tabular}
}
\end{center}
\hspace{2cm}\\
\begin{center}
\fontsize{8}{0}{\selectfont
\begin{tabular}{| c| c|c| c|  }
\hline $\mathfrak{l}_{5,7}$  & $E_3$  & $E_4$&$E_5$ \\
\hline  $R(E_1,E_2)$  & $-\frac{3}{4}(bd+cf)E_1$ & $-\frac{1}{4}(3ad+cg)E_1$ & $-\frac{1}{4}(af+bg)E_1$\\
\hline $R(E_1,E_3)$ & $-\frac{1}{4}(3d^2+3f^2-a^2)E_1$ & $-\frac{1}{4}(3fg-ab)E_1$ & $\frac{1}{4}(ac-dg)E_1$\\
\hline $R(E_1,E_4)$& $-\frac{1}{4}(3fg-ab)E_1$ & $\frac{1}{4}(b^2+d^2-3g^2)E_1$ & $\frac{1}{4}(bc+df)E_1$\\
\hline $R(E_1,E_5)$  & $\frac{1}{4}(ac-dg)E_1$ & $\frac{1}{4}(bc+df)E_1$ & $\frac{1}{4}(c^2+f^2+g^2)E_1$\\
\hline $R(E_2,E_3)$ & $\frac{1}{4}(a^2E_2+adE_4+afE_5)$ & $\frac{1}{4}(abE_2-adE_3+(bf-cd)E_5)$ & $\frac{1}{4}(acE_2-afE_3+(cd-bf)E_4)$\\
\hline $R(E_2,E_4)$ & $\frac{1}{4}(abE_2-bdE_4+(ag-cd)E_5)$ & $\frac{1}{4}(b^2E_2+bdE_3+bgE_5)$ & $\frac{1}{4}(bcE_2+(cd-ag)E_3-bgE_4)$\\
\hline $R(E_2,E_5)$ & $\frac{1}{4}(acE_2+(ag-bf)E_4-cfE_5)$ & $\frac{1}{4}(bcE_2+(bf-ag)E_3-cgE_5)$ & $\frac{1}{4}(c^2E_2+cfE_3+cgE_4)$\\
\hline $R(E_3,E_4)$ & $-\frac{1}{4}(adE_2+d^2E_4+dfE_5)$ & $\frac{1}{4}(bdE_2+d^2E_3+dgE_5)$ & $\frac{1}{4}((bf-ag)E_2+dfE_3-dgE_4)$\\
\hline $R(E_3,E_5)$ & $-\frac{1}{4}(afE_2+dfE_4+f^2E_5)$ & $\frac{1}{4}((cd-ag)E_2+dfE_3-fgE_5)$ & $\frac{1}{4}(cfE_2+f^2E_3+fgE_4)$\\
\hline$R(E_4,E_5)$ & $\frac{1}{4}((cd-bf)E_2+dgE_4-fgE_5)$ & $-\frac{1}{4}(bgE_2+dgE_3+g^2E_5)$ & $\frac{1}{4}(cgE_2+fgE_3+g^2E_4)$\\
\hline
\end{tabular}
}
\end{center}
\hspace{6cm}\\
\begin{center}
\fontsize{6}{0}{\selectfont
\begin{tabular}{| c| c|c|   }
\hline $\mathfrak{l}_{5,6}$  & $E_1$  & $E_2$ \\
\hline  $R(E_1,E_2)$  & $\frac{1}{4}(3(a^2+b^2+c^2)E_2+3(bd+cf)E_3+(3cg+ad)E_4+(af+bg)E_5)$ & $-\frac{1}{4}(3(a^2+b^2+c^2)E_1-3chE_3-ahE_5)$ \\
\hline $R(E_1,E_3)$ & $\frac{1}{4}(3(bd+cf)E_2+(3f^2+3d^2-a^2)E_3+(3fg-ab)E_4+(dg-ac)E_5)$  & $-\frac{1}{4}(3(bd+cf)E_1-3fhE_3-ghE_4)$ \\
\hline $R(E_1,E_4)$ & $\frac{1}{4}((ad+3cg)E_2+(3fg-ab)E_3+(3g^2-d^2-b^2)E_4-(bc+fd)E_5)$ & $-\frac{1}{4}(ad-3cg)E_1+\frac{1}{2}ghE_3$ \\
\hline $R(E_1,E_5)$ & $\frac{1}{4}((af+bg)E_2+(dg-ac)E_3-(bc+df)E_4-(c^2+f^2+g^2)E_5)$ &  $-\frac{1}{4}((af+bg)E_1+dhE_4+fhE_5)$\\
\hline $R(E_2,E_3)$ & $\frac{1}{4}(3chE_2+3fhE_3+2hgE_4)$ &  $-\frac{1}{4}(3chE_1+(a^2-3h^2)E_3+abE_4+acE_5)$\\
\hline $R(E_2,E_4)$ & $\frac{1}{4}(ghE_3-dhE_5)$ &  $-\frac{1}{4}(abE_3+b^2E_4+bcE_5)$ \\
\hline $R(E_2,E_5)$ & $\frac{1}{4}(ahE_2-fhE_5)$ &  $-\frac{1}{4}(ahE_1+acE_3+bcE_4+(c^2+h^2)E_5)$ \\
\hline $R(E_3,E_4)$ & $-\frac{1}{4}(ghE_2-bhE_5)$ &  $\frac{1}{4}(ghE_1+adE_3-bdE_4+(ag-bf)E_5)$ \\
\hline $R(E_3,E_5)$ & $-\frac{1}{4}(ahE_3-chE_5)$ &  $\frac{1}{4}(afE_3+(ag-cd)E_4-cfE_5)$ \\
\hline$R(E_4,E_5)$ & $\frac{1}{4}(dhE_2-bhE_3)$ &  $-\frac{1}{4}(dhE_1+(cd-bf)E_3-bgE_4+cgE_5)$ \\
\hline
\end{tabular}}
\end{center}
\hspace{2cm}\\
\begin{center}
\fontsize{2}{0}{\selectfont
\begin{tabular}{| c| c|c| c|  }
\hline $\mathfrak{l}_{5,6}$  & $E_3$  & $E_4$&$E_5$ \\
\hline  $R(E_1,E_2)$  & $-\frac{1}{4}(3(bd+cf)E_1+3chE_2+ghE_4)$ & $-\frac{1}{4}(3(ad+cg)E_1-ghE_3+dhE_5)$ & $-\frac{1}{4}
((af+bg)E_1+ahE_2+dhE_4)$\\
\hline $R(E_1,E_3)$ & $-\frac{1}{4}((3d^2+3f^2-a^2)E_1+3fhE_2+ahE_5)$ & $-\frac{1}{4}((3fg-ab)E_1+ghE_2+bhE_5)$ & $\frac{1}{4}((ac-dg)E_1+ahE_3+bhE_4)$\\
\hline $R(E_1,E_4)$& $-\frac{1}{4}((3fg-ab)E_1+2ghE_2)$ & $\frac{1}{4}(b^2+d^2-3g^2)E_1$ & $\frac{1}{4}(bc+df)E_1$\\
\hline $R(E_1,E_5)$  & $\frac{1}{4}((ac-dg)E_1+bhE_4+chE_5)$ & $\frac{1}{4}((bc+df)E_1+dhE_2-bhE_3)$ & $\frac{1}{4}((c^2+f^2+g^2)E_1+fhE_2-chE_3)$\\
\hline $R(E_2,E_3)$ & $-\frac{1}{4}(3fhE_1+(3h^2-a^2)E_2-adE_4-afE_5)$ & $-\frac{1}{4}(2ghE_1-abE_2+adE_3+(cd-bf)E_5)$ & $\frac{1}{4}(acE_2-afE_3+(cd-bf)E_4)$\\
\hline $R(E_2,E_4)$ & $-\frac{1}{4}(ghE_1-abE_2+bdE_4+(cd-ag)E_5)$ & $\frac{1}{4}(b^2E_2+bdE_3+bgE_5)$ & $\frac{1}{4}(dhE_1+bcE_2+(cd-ag)E_3-bgE_4)$\\
\hline $R(E_2,E_5)$ & $\frac{1}{4}(acE_2+(ag-bf)E_4-cfE_5)$ & $\frac{1}{4}(bcE_2+(bf-ag)E_3-cgE_5)$ & $\frac{1}{4}(fhE_1+(c^2+h^2)E_2+cfE_3+cgE_4)$\\
\hline $R(E_3,E_4)$ & $-\frac{1}{4}(adE_2+d^2E_4+dfE_5)$ & $\frac{1}{4}(bdE_2+d^2E_3+dgE_5)$ & $-\frac{1}{4}(bhE_1+(ag-bf)E_2-dfE_3+dgE_4)$\\
\hline $R(E_3,E_5)$ & $\frac{1}{4}(ahE_1-afE_2-dfE_4-(f^2+h^2)E_5)$ & $\frac{1}{4}((cd-ag)E_2+dfE_3-fgE_5)$ & $-\frac{1}{4}(chE_1-cfE_2-(f^2+h^2)E_3-fgE_4)$\\
\hline$R(E_4,E_5)$ & $\frac{1}{4}(bhE_1+(cd-bf)E_2+dgE_4-fgE_5)$ & $-\frac{1}{4}(bgE_2+dgE_3+g^2E_5)$ & $\frac{1}{4}(cgE_2+fgE_3+g^2E_4)$\\
\hline
\end{tabular}}
\end{center}
\hspace{6cm}\\
\begin{center}
\fontsize{8}{0}{\selectfont
\begin{tabular}{| c| c|c|   }
\hline $\mathfrak{l}_{5,5}$  & $E_1$  & $E_2$ \\
\hline  $R(E_1,E_2)$  & $\frac{1}{4}(3(a^2+b^2)E_2+3bcE_3+3bdE_4+adE_5)$ & $-\frac{3}{4}((a^2+b^2)E_1-beE_3)$ \\
\hline $R(E_1,E_3)$ & $\frac{3}{4}(bcE_2+c^2E_3+cdE_4)$  & $-\frac{1}{4}(3bcE_1-3ceE_3-deE_4)$ \\
\hline $R(E_1,E_4)$ & $\frac{1}{4}(3bdE_2+3cdE_3+(3d^2-a^2)E_4-abE_5)$ & $-\frac{1}{4}(3bdE_1-2edE_3)$ \\
\hline $R(E_1,E_5)$ & $\frac{1}{4}(adE_2-abE_4-(b^2+c^2+d^2)E_5)$ &  $-\frac{1}{4}(adE_1+ceE_5)$\\
\hline $R(E_2,E_3)$ & $\frac{1}{4}(3beE_2+3ceE_3+2deE_4)$ &  $-\frac{3}{4}(beE_1-e^2E_3)$\\
\hline $R(E_2,E_4)$ & $\frac{1}{4}deE_3$ &  $-\frac{1}{4}(a^2E_4+abE_5)$ \\
\hline $R(E_2,E_5)$ & $-\frac{1}{4}ceE_5$ &  $-\frac{1}{4}(abE_4+(b^2+e^2)E_5)$ \\
\hline $R(E_3,E_4)$ & $-\frac{1}{4}(deE_2-aeE_5)$ &  $\frac{1}{4}(deE_1-acE_5)$ \\
\hline $R(E_3,E_5)$ & $\frac{1}{4}beE_5$ &  $-\frac{1}{4}bcE_5$ \\
\hline$R(E_4,E_5)$ & $-\frac{1}{4}aeE_3$ &  $\frac{1}{4}(acE_3+adE_4-bdE_5)$ \\
\hline
\end{tabular}}
\end{center}
\hspace{2cm}\\
\begin{center}
\fontsize{8}{0}{\selectfont
\begin{tabular}{| c| c|c| c|  }
\hline $\mathfrak{l}_{5,5}$  & $E_3$  & $E_4$&$E_5$ \\
\hline  $R(E_1,E_2)$  & $-\frac{1}{4}(3bcE_1+3beE_2+deE_4)$ & $-\frac{1}{4}(3bdE_1-deE_3)$ & $0$\\
\hline $R(E_1,E_3)$ & $-\frac{3}{4}(c^2E_1+ceE_2)$ & $-\frac{1}{4}(3cdE_1+deE_2+aeE_5)$ & $\frac{1}{4}aeE_4$\\
\hline $R(E_1,E_4)$& $-\frac{1}{4}(3cdE_1+2deE_2)$ & $\frac{1}{4}(a^2-3d^2)E_1$ & $\frac{1}{4}abE_1$\\
\hline $R(E_1,E_5)$  & $\frac{1}{4}(aeE_4+beE_5)$ & $\frac{1}{4}(abE_1-aeE_3)$ & $\frac{1}{4}((b^2+c^2+d^2)E_1+ceE_2-beE_3)$\\
\hline $R(E_2,E_3)$ & $-\frac{3}{4}(ceE_1+e^2E_2)$ & $-\frac{1}{4}(2deE_1-acE_5)$ & $-\frac{1}{4}acE_4$\\
\hline $R(E_2,E_4)$ & $-\frac{1}{4}deE_1$ & $\frac{1}{4}(a^2E_2+adE_5)$ & $\frac{1}{4}(abE_2-adE_4)$\\
\hline $R(E_2,E_5)$ & $-\frac{1}{4}(acE_4+bcE_5)$ & $\frac{1}{4}(abE_2+acE_3-bdE_5)$ & $\frac{1}{4}(ceE_1+(b^2+e^2)E_2+bcE_3+bdE_4)$\\
\hline $R(E_3,E_4)$ & $0$ & $0$ & $-\frac{1}{4}(aeE_1-acE_2)$\\
\hline $R(E_3,E_5)$ & $-\frac{1}{4}(c^2+e^2)E_5$ & $-\frac{1}{4}cdE_5$ & $-\frac{1}{4}(beE_1-bcE_2-(c^2+e^2)E_3-cdE_4)$\\
\hline$R(E_4,E_5)$ & $\frac{1}{4}(aeE_1-acE_2-cdE_5)$ & $-\frac{1}{4}(adE_2+d^2E_5)$ & $\frac{1}{4}(bdE_2+cdE_3+d^2E_4)$\\
\hline
\end{tabular}}
\end{center}
\hspace{6cm}\\
\begin{center}
\fontsize{8}{0}{\selectfont
\begin{tabular}{| c| c|c|   }
\hline $\mathfrak{l} _{5,9}$  & $E_1$  & $E_2$ \\
\hline  $R(E_1,E_2)$  & $\frac{1}{4}(3(k^2+l^2+m^2)E_2+3lpE_3+kpE_4)$ & $-\frac{1}{4}(3(k^2+l^2+m^2)E_1-3mqE_3-kqE_5)$ \\
\hline $R(E_1,E_3)$ & $\frac{1}{4}(3lpE_2+(3p^2-k^2)E_3-klE_4-kmE_5)$  & $-\frac{3}{4}lpE_1$ \\
\hline $R(E_1,E_4)$ & $\frac{1}{4}(kpE_2-klE_3-(l^2+p^2)E_4-lmE_5)$ & $-\frac{1}{4}kpE_1$ \\
\hline $R(E_1,E_5)$ & $-\frac{1}{4}(kmE_3+lmE_4+m^2E_5)$ &  $-\frac{1}{4}pqE_4$\\
\hline $R(E_2,E_3)$ & $\frac{3}{4}mqE_2$ &  $-\frac{1}{4}(3mqE_1+(k^2-3q^2)E_3+klE_4+mkE_5)$\\
\hline $R(E_2,E_4)$ & $-\frac{1}{4}pqE_5$ &  $-\frac{1}{4}(klE_3+l^2E_4+mlE_5)$ \\
\hline $R(E_2,E_5)$ & $\frac{1}{4}kqE_2$ &  $-\frac{1}{4}(kqE_1+kmE_3+lmE_4+(m^2+q^2)E_5)$ \\
\hline $R(E_3,E_4)$ & $\frac{1}{4}lqE_5$ &  $\frac{1}{4}(kpE_3-lpE_4)$ \\
\hline $R(E_3,E_5)$ & $-\frac{1}{4}(kqE_3-mqE_5)$ &  $-\frac{1}{4}mpE_4$ \\
\hline$R(E_4,E_5)$ & $\frac{1}{4}(pqE_2-lqE_3)$ &  $-\frac{1}{4}(pqE_1+mpE_3)$ \\
\hline
\end{tabular}}
\end{center}
\hspace{2cm}\\
\begin{center}
\fontsize{8}{0}{\selectfont
\begin{tabular}{| c| c|c| c|  }
\hline $\mathfrak{l} _{5,9}$  & $E_3$  & $E_4$&$E_5$ \\
\hline  $R(E_1,E_2)$  & $-\frac{3}{4}(lpE_1+mqE_2)$ & $-\frac{1}{4}(kpE_1-pqE_5)$ & $-\frac{1}{4}
(kqE_2+pqE_4)$\\
\hline $R(E_1,E_3)$ & $\frac{1}{4}((k^2-3p^2)E_1-kqE_5)$ & $\frac{1}{4}(klE_1-lqE_5)$ & $\frac{1}{4}(kmE_1+kqE_3+lqE_4)$\\
\hline $R(E_1,E_4)$& $\frac{1}{4}klE_1$ & $\frac{1}{4}(l^2+p^2)E_1$ & $\frac{1}{4}lmE_1$\\
\hline $R(E_1,E_5)$  & $\frac{1}{4}(kmE_1+lqE_4+mqE_5)$ & $\frac{1}{4}(lmE_1+pqE_2-lqE_3)$ & $\frac{1}{4}(m^2E_1-mqE_3)$\\
\hline $R(E_2,E_3)$ & $\frac{1}{4}((k^2-3q^2)E_2+kpE_4)$ & $\frac{1}{4}(klE_2-kpE_3-mpE_5)$ & $\frac{1}{4}(kmE_2+mpE_4)$\\
\hline $R(E_2,E_4)$ & $\frac{1}{4}(klE_2-lpE_4-mpE_5)$ & $\frac{1}{4}(l^2E_2+lpE_3)$ & $\frac{1}{4}(pqE_1+lmE_2+mpE_3)$\\
\hline $R(E_2,E_5)$ & $\frac{1}{4}kmE_2$ & $\frac{1}{4}lmE_2$ & $\frac{1}{4}(m^2+q^2)E_2$\\
\hline $R(E_3,E_4)$ & $-\frac{1}{4}(kpE_2+p^2E_4)$ & $\frac{1}{4}(lpE_2+p^2E_3)$ & $-\frac{1}{4}lqE_1$\\
\hline $R(E_3,E_5)$ & $\frac{1}{4}(kqE_1-q^2E_5)$ & $\frac{1}{4}mpE_2$ & $-\frac{1}{4}(mqE_1-q^2E_3)$\\
\hline$R(E_4,E_5)$ & $\frac{1}{4}(lqE_1+mpE_2)$ & $0$ & $0$\\
\hline
\end{tabular}}
\end{center}
\end{document}